\newcommand{\chapeau}{{\rlap{\smash{\hbox{\lower4pt\hbox{\hskip1pt$\widehat{\phantom{u}}$}}}}}\mbox{ }}
\DeclareSymbolFont{cyrletters}{OT2}{wncyr}{m}{n}
\DeclareMathSymbol{\sha}{\mathalpha}{cyrletters}{"58}
 \newtheorem{thm}{Theorem}[section]
 \newtheorem*{thm*}{Theorem}
 \newtheorem{thm'}[thm]{Theorem'}
 \newtheorem*{pb*}{\textit{Open question}}
 \newtheorem*{cor*}{\textit{Corollary}}
 \newtheorem{conj}[thm]{Conjecture}
 \newtheorem{cor}[thm]{Corollary}
 \newtheorem{lem}[thm]{Lemma}
 \newtheorem{prop}[thm]{Proposition}
 \theoremstyle{definition}
 \newtheorem{defn}[thm]{Definition}
 \theoremstyle{definition}
 \theoremstyle{remark}
 \theoremstyle{remark}
 \newtheorem{rem}[thm]{Remark}
 \theoremstyle{remark}
 \numberwithin{equation}{subsection}
 \newcommand{\To}{\longrightarrow}
 \newcommand{\BM}{Brauer\textendash Manin}
 \newcommand{\BMo}{Brauer\textendash Manin obstruction}
 \newcommand{\E}{$(\textup{E})$~}
 \newcommand{\ZZ}{\textup{Z}_0}
 \newcommand{\Hil}{\textsf{Hil}}
 \newcommand{\Sym}{\textup{Sym}}
 \renewcommand{\div}{\textup{div}}
 \renewcommand{\P}{\mathbb{P}}
 \newcommand{\Q}{\mathbb{Q}}
 \newcommand{\Z}{\mathbb{Z}}
 \newcommand{\Br}{\textup{Br}}
 \newcommand{\CH}{\textup{CH}}
  \newcommand{\eff}{\textup{eff}}
 \renewcommand{\H}{\textup{H}}
 \newcommand{\Hom}{\textup{Hom}}
 \newcommand{\Spec}{\textup{Spec}}
 \renewcommand{\dim}{\textup{dim}}
\begin{document}

\title[zero-cycles on products]
{Compatibility of weak approximation for zero-cycles on products of varieties}

\author{ Yongqi LIANG  }

\address{Yongqi LIANG
\newline 96 Jinzhai Road,
\newline CAS Wu Wen-Tsun Key Laboratory of Mathematics,
\newline School of Mathematical Sciences,
\newline University of Science and Technology of China,
\newline Hefei, Anhui, 230026 P. R. China}

\email{yqliang@ustc.edu.cn}

\thanks{\textit{Key words} : 0-cycle, Brauer\textendash Manin obstruction, weak approximation}

\thanks{\textit{MSC 2020} : 14G12   (11G35, 14C25)}

\date{\today.}



\maketitle

\begin{abstract}
Zero-cycles are conjectured to satisfy weak approximation with \BMo~for proper smooth varieties defined over number fields. Roughly speaking, we prove that the conjecture is compatible for products of rationally connected varieties, K3 surfaces, Kummer varieties, and one curve.
\end{abstract}


\tableofcontents

\section{Introduction}

We consider proper smooth geometrically integral varieties $V$ defined over a number field $k$. The weak approximation property for rational points on $V$ describes the relation between rational points of $V$ over $k$ and rational points of $V$ over all its completions $k_v$. By using the Brauer group $\Br(V)$, Yu. I. Manin \cite{Manin} introduced a pairing to study weak approximation for rational points. In \cite{CT95}, J.-L. Colliot-Th\'el\`ene extended the \BM~pairing to study 0-cycles, see also \S \ref{BMpairing}. From local to global, we consider weak approximation with \BMo~for 0-cycles on $V$, which is very closely related to the exactness of the following sequence induced by the \BM~pairing
$$\varprojlim_n\CH_0(V)/n\To\prod_{v\in\Omega_k}\varprojlim_n\CH_0'(V_{k_v})/n\To\Hom(\Br(X),\Q/\Z)\leqno(\textup{E})$$
where the inverse limit is taken over all positive integers on the cokernel of multiplication by $n$ of modified Chow groups, see \S \ref{WABMnotion} for definition and more details. The exactness of \E means roughly that 0-cycles of any degree satisfy weak approximation with \BMo. The exactness is conjectured by K. Kato and S. Saito \cite[\S 7]{KatoSaito86} and by J.-L. Colliot-Th\'el\`ene \cite[\S 1]{CT95} for all proper smooth geometrically integral varieties defined over number fields. When $V$ is a curve, the conjecture is implied by  the finiteness of the Tate\textendash Shafarevich group of its jacobian. This was proved by S. Saito \cite[(7-1) and (7-5)]{Saito89} and J.-L. Colliot-Th\'el\`ene \cite[\S 3]{CT99HP0-cyc}, see also \cite[Remarque 1.1(iv)]{Wittenberg}. For varieties with a fibration structure, the conjecture was also proved under divers assumptions on the fibration, we refer to the survey by O. Wittenberg \cite{WittSLC18} for more information. Recently, Y. Harpaz and O. Wittenberg \cite{HarWittJAMS} proved the conjecture for smooth compactifications of homogeneous spaces of connected linear algebraic groups.

In this paper, we consider the compatibility of the conjecture for products of varieties. We prove the following theorem, which provides evidence of the conjecture.

\begin{thm*}
Let $X$ and $Y$ be geometrically rationally connected varieties defined over a number field $k$. Suppose that \E is exact for $X$ and $Y$ after any finite extension of the ground field. Then \E is also exact for $X\times Y$.
\end{thm*}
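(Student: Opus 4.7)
The plan is to run a fibration argument over the second projection $q : X \times Y \to Y$. Starting from a family $(z_v)_v$ of local cycles on $X \times Y$ orthogonal to $\Br(X\times Y)$, I will first push it forward to $Y$ and approximate the result using exactness of \E for $Y$, and then lift the approximation fibrewise using exactness of \E for $X$ after base change to each residue field appearing in the support of the approximation on $Y$. Both halves use the hypothesis about arbitrary finite base change, since the fibres are naturally defined only over the extensions $k(y_i)$ of $k$.

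The preliminary ingredient is a structural description of $\Br(X\times Y)$. Because $X$ and $Y$ are smooth proper and geometrically rationally connected, $\Br(\bar X)=\Br(\bar Y)=0$, and the geometric Picard groups $\Pic(\bar X),\Pic(\bar Y)$ are finitely generated and torsion-free. Combining the K\"unneth formula in \'etale cohomology with the Hochschild--Serre spectral sequence then decomposes $\Br(X\times Y)$ modulo constants into the pullbacks $p^*\Br(X)$, $q^*\Br(Y)$ and a ``mixed'' term controlled by the Galois cohomology of $\Pic(\bar X)\otimes\Pic(\bar Y)$ via an external cup product. All three pieces are finite over the image of $\Br(k)$, which will be crucial for bookkeeping with the inverse limit in $n$.

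Given now $(z_v)_v\in\prod_v\varprojlim_n\CH_0'((X\times Y)_{k_v})/n$ orthogonal to $\Br(X\times Y)$, the projection formula shows that $(q_*z_v)_v$ is orthogonal to $\Br(Y)$. By hypothesis \E holds for $Y$, so this family is approximated by some global $w\in\varprojlim_n\CH_0(Y)/n$. Fixing a large integer $N$ and a finite set $S$ of places, represent $w$ modulo $N$ by an actual 0-cycle $\sum_i n_i[y_i]$ on $Y$. After modifying each $z_v$ for $v\in S$ by a cycle supported in the fibres of $q$, I reduce to the case where $q_*z_v$ is exactly $\sum_i n_i[y_i]$ at every $v\in S$. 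The portion $z_{v,i}$ of $z_v$ living over $y_i$ is then naturally a local 0-cycle on $X_{k(y_i)}$ at the places of $k(y_i)$ above $v$.

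The main obstacle is to verify, for each $i$, that the family $(z_{v,i})_v$ is orthogonal to $\Br(X_{k(y_i)})$: this is where the Brauer-group decomposition above enters. One identifies $\Br(X_{k(y_i)})/\Br_0$ with a piece of $\Br(X\times Y)/\Br_0$ via pullback along the closed immersion $X_{k(y_i)}\hookrightarrow X\times Y$, treating each of the three summands in turn; the mixed summand is the delicate case, and must be shown to pair trivially with the fibre cycle once the projection to $Y$ has been pinned down to $\sum_i n_i[y_i]$. Once fibrewise orthogonality is established, exactness of \E for $X_{k(y_i)}$ (given by hypothesis) yields global 0-cycles on each $X_{k(y_i)}$ approximating the $z_{v,i}$, and pushing them forward through the closed immersions $X_{k(y_i)}\hookrightarrow X\times Y$ and summing produces a global 0-cycle on $X\times Y$ approximating $(z_v)_v$ modulo $N$. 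A standard diagonal argument over $N$ then produces the required element of $\varprojlim_n\CH_0(X\times Y)/n$.
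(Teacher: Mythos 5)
Your overall scheme \textemdash~fibre $X\times Y$ over $Y$ via $q$, approximate the pushforward on $Y$, then work fibrewise over the points $y_i$ in the support of the approximating cycle \textemdash~is genuinely different from the paper's, which passes to $\P^1\times X\times Y$, fibres over the auxiliary $\P^1$, and concentrates all the local data in the fibre over a single, freely chosen closed point $\theta$ of $\P^1$. Unfortunately two steps of your plan are real gaps, and they are exactly the two difficulties that the auxiliary $\P^1$ is introduced to solve. First, the reduction ``modify each $z_v$ by a cycle supported in the fibres so that $q_*z_v$ is exactly $\sum_i n_i[y_i]$'' is not available. Exactness of \E for $Y$ only gives that $q_*z_v$ and $\sum_i n_i[y_i]$ agree in $\CH_0(Y_{k_v})/N$; a rational equivalence on the base does not lift to a modification of $z_v$ supported over the $y_i$, and there is no reason a cycle in the class of $z_v$ modulo $N$ should exist whose projection is literally $\sum_i n_i[y_i]$. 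Even over a curve base the paper cannot do this with a \emph{prescribed} 0-cycle downstairs: it must first make the projections effective and separable (relative moving lemma), then \emph{choose} a new closed point $\theta$ by Ekedahl's effective Hilbert irreducibility theorem so as to be $v$-adically close to those projections in $\Sym^\Delta_C(k_v)$, and finally invoke the implicit function theorem to push the local cycles into the fibre over $\theta$. None of this machinery (Riemann\textendash Roch, symmetric products of a curve, Hilbert irreducibility with approximation) is available over a higher-dimensional base $Y$, and in your setup the $y_i$ are imposed on you rather than chosen.

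Second, even granting that reduction, orthogonality of $(z_v)_v$ to $\Br(X\times Y)$ only yields orthogonality of the fibre components to the \emph{image} of $\Br(X\times Y)\to\Br(X_{k(y_i)})$, whereas applying \E for $X_{k(y_i)}$ requires orthogonality to all of $\Br(X_{k(y_i)})/\Br_0(X_{k(y_i)})$. For geometrically rationally connected $X$ the restriction $\Br(X)/\Br_0(X)\to\Br(X_K)/\Br_0(X_K)$ is surjective only when $K$ is linearly disjoint from a suitable finite extension $k'$ of $k$ (\cite[Proposition 3.1.1]{Liang4}); since the residue fields $k(y_i)$ are not under your control, classes of $\Br(X_{k(y_i)})$ invisible from $\Br(X\times Y)$ can obstruct, and your K\"unneth decomposition \textemdash~whose ``mixed'' term you in any case explicitly leave unproved \textemdash~cannot recover them. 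The paper handles this by taking $\theta$ in a Hilbertian subset, which forces $k(\theta)$ to be linearly disjoint from $k'$. Finally, note that your route never confronts the difficulty the paper isolates in its key lemma, namely how to assemble a global 0-cycle on $X\times Y$ from global 0-cycles on $X$ and on $Y$ matching a given local class; the paper does this by an intersection product and needs the local classes to be represented by rational points. If your two gaps could be filled, your fibrewise assembly would bypass that lemma, which would be interesting; but as written both gaps are fatal.
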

We also prove variants of this result in the following respects, where the statements are similar but not exactly the same.
\begin{itemize}
\item[-] the number of factors of the product is allowed to be arbitrary;
\item[-] the factors are allowed to be K3 surfaces;
\item[-] the factors are allowed to be Kummer varieties;
\item[-] at most one factor of the product is allowed to be a curve of positive genus.
\end{itemize}
For precise statements, please refer to Theorem \ref{mainthm} and its consequences in \S \ref{mainresult}.
Before this result, some cases for the compatibility of the conjecture for products of varieties are already known:
\begin{itemize}
\item[-] If $Y$ is the projective space $\P^n$, then the exactness of \E for $X$ implies its exactness for $X\times\P^n$. This is well known, see Lemma \ref{homotopylemma}.
\item[-] As a very particular case of the main result of \cite{Wittenberg}, O. Wittenberg proved the case where $X$ is a geometrically rationally connected variety and $Y$ is a curve whose jacobian has finite Tate\textendash Shafarevich group.
\item[-] In the previous work of the author \cite{Liang10}, we proved the case where $X$ is a geometrically rationally connected and $Y$ is one of the following varieties:
    \begin{enumerate}
    \item a Ch\^atelet surface;
    \item a smooth compactification of a homogeneous space of a connected linear algebraic group with connected stabilizer;
    \item a smooth compactification of a homogeneous space of a semi-simple simply connected algebraic group with abelian stabilizer;
    \item  more generally, $Y$ is geometrically rationally connected and satisfies weak approximation with \BMo~for \emph{rational points} after any finite extension of the ground field.
    \end{enumerate}
\end{itemize}
The fourth case was not satisfactory in the sense that we  made an assumption on the arithmetic of rational points, which implies the exactness of \E according to \cite[Theorem A]{Liang4}. One could discuss only the arithmetic of 0-cycles  without being aware of any information on rational points. For example, the result of this paper can be applied to the case where $Y$ is a smooth compactification of a homogeneous space of a connected linear algebraic group with \emph{arbitrary} stabilizer, for which \E is exact over any number field by \cite[Th\'eor\`em A]{HarWittJAMS}. On such a variety $Y$, weak approximation with \BMo~for rational points is still unknown.

Most ingredients of the proof are known in the literature except the simple but crucial observation: Lemma \ref{keylemma}, which allows us to discuss the weak approximation property for 0-cycles on a product of varieties. We subtly combines them to cook the proof. The paper is organized as follows. We recall terminology and basic notions about weak approximation for 0-cycles in \S \ref{notions} and state the main result and its consequences in \S \ref{mainresult}. The detailed proof is given in \S \ref{proofsection} after some preliminaries.

\section{Terminology and basic notions}\label{notions}

\subsection{Notation}
In this paper, the ground field $k$ is always of characteristic $0$. In most statements, it is a number field. We fix an algebraic closure $\bar{k}$ of $k$ and denote by $\Gamma_k$ the absolute Galois group of $k$. We denote by $\Omega_k$ the set of places of $k$. For a place $v\in \Omega_k$, we denote by $k_v$ the completion of $k$ with respect to $v$, it is either a finite extension of $\mathbb{Q}_p$ or one of the archimedean local fields $\mathbb{R}$ or $\mathbb{C}$. $S$ will be a finite subset of $\Omega_k$, on which we will discuss weak approximation properties for 0-cycles.

The word ``variety" means a separated scheme of finite type defined over a field $k$. If $K$ is a field extension of $k$, for a $k$-variety $V$ we write $V_K$ for $V\times_{\Spec(k)}\Spec(K)$. When $X$ and $Y$ are varieties defined over a same field, the product $X\times Y$ means the fiber product over their field of definition.

The Brauer group $\Br(V)$ of a $k$-variety is the second \'etale cohomology group $\H^2_{\scriptsize{\textup{\'et}}}(V,\mathbb{G}_{\textup{m}})$. When $V=\Spec(k)$ is a spectrum of a field, we write $\Br(k)$ instead of $\Br(\Spec(k))$.  The image of the natural homomorphism $\Br(k)\To\Br(V)$ is denoted by $\Br_0(V)$. The kernel of the base change homomorphism $\Br(V)\to \Br(V_{\bar{k}})$ is denoted by $\Br_1(V)$.

When $A$ is an abelian group and $n$ is a positive integer, we denote by $A/n$ the cokernel of the multiplication by $n$ homomorphism. We denote by $A\{n\}$ the $n$-primary part of $A$ which is defined to be the union for $r\in\mathbb{N}$ of the $n^r$-torsion subgroup of $A$.

\subsection{Brauer\textendash Manin pairing for 0-cycles}\label{BMpairing}
The set of 0-cycles on a variety $V$, denoted by $\ZZ(V)$, is the free abelian group generated by its closed points. Rational equivalence $\sim$ is a equivalent relation defined on $\ZZ(V)$, the quotient is the Chow group of 0-cycles $\CH_0(V)$.
A 0-cycle is written as a $\Z$-linear combination of closed points of $V$. If the coefficients are all positive, the 0-cycle is called \emph{effective}. The set of effective 0-cycles of degree $\Delta$ on $V$ can be identified with the set of rational points of the $k$-variety $\Sym^\Delta_V$ \textemdash~the symmetric product of $V$ over $k$.
An effective 0-cycle is called \emph{separable} if it is a sum of distinct closed points.

When $V$ is a proper smooth geometrically integral variety defined over a number field $k$, Yu. I. Manin defined a pairing between the Brauer group $\Br(V)$ and the product of local rational points $\prod_{v\in\Omega_k}V(k_v)$, cf. \cite{Manin}. In \cite{CT95}, J.-L. Colliot-Th\'el\`ene extended the Brauer\textendash Manin pairing to 0-cycles:
$$\Br(V)\times \prod_{v\in\Omega_k}\ZZ(V_{k_v})\To \Q/\Z$$
$$(b,(z_v)_{v\in\Omega_k})\mapsto \sum_{v\in\Omega_k}\textup{inv}_v(b(z_v))$$
where the $\textup{inv}_v:\Br(k_v)\to\Q/\Z$ is the local invariant at $v$ given by local class field theory and where the evaluation of $b\in\Br(V)$ at a 0-cycle $z_v$ is defined as follows. If $z_v=\sum_Pn_PP$ then $b(z_v)=\sum_Pn_P\textup{cores}_{k_v(P)|k_v}(b(P))$ where $b(P)\in\Br(k_v(P))$ is the pull-back of $b\in\Br(V)$ by the rational point $P$ viewed as a morphism $\Spec(k_v(P))\to V$ of schemes and where $\textup{cores}_{k_v(P)|k_v}:\Br(k_v(P))\To\Br(k_v)$ is the correstriction homomorphism.
This pairing factorises through Chow groups of 0-cycles.
Moreover, it factorises also through the modified Chow groups $\CH'_0(V_{k_v})$ defined as follows. When $v$ is a non archimedean local field, the modified Chow group is the usual Chow group; when $v$ is a complex place, it is $0$; when $v$ is a real place then $\CH'_0(V_{\mathbb{R}})=\CH_0(V_{\mathbb{R}})/\textup{N}_{\mathbb{C}|\mathbb{R}}\CH_0(V_\mathbb{C})$.
By global class field theory, we have an exact sequence $$0\to\Br(k)\to\bigoplus_{v\in\Omega_k}\Br(k_v)\to\Q/\Z\to0$$ which implies that global 0-cycles are annihilated by the whole Brauer group under the pairing.
As $V$ is a smooth variety, its Brauer group is a torsion group, we deduce a complex $$\varprojlim_n\CH_0(V)/n\To\prod_{v\in\Omega_k}\varprojlim_n\CH_0'(V_{k_v})/n\To\Hom(\Br(V),\Q/\Z)\leqno(\textup{E})$$
where the projective limit is taken over $n\in\mathbb{N}$.  The following conjecture on the exactness of \E means roughly that local-global principle and weak approximation for 0-cycles are completely controlled by the Brauer group. We will explain more in the next subsection.
\begin{conj}[Kato\textendash Saito and  Colliot-Th\'el\`ene]
\ \\ \indent The sequence \E is exact for all proper smooth varieties.
\end{conj}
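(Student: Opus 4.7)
Since the final statement is the well-known open conjecture of Kato--Saito and Colliot-Thélène, my plan cannot be a complete proof but only a sketch of the strategy that has yielded its known instances, together with how this must interact with the compatibility results of this paper. The first step is to unwind the projective limit: exactness of \E amounts to showing that, for every integer $n$, any family $(z_v)\in\prod_v\CH_0'(V_{k_v})/n$ annihilated by $\Br(V)$ under the \BM~pairing lifts to a global class in $\CH_0(V)/n$, uniformly enough to pass to the inverse limit. Since $\Sym^\Delta V$ parametrizes effective 0-cycles of degree $\Delta$, one can hope, after adding a fixed auxiliary cycle and choosing $\Delta$ sufficiently divisible, to replace each $z_v$ by an effective separable representative corresponding to a rational point of an open subscheme of $\Sym^\Delta V$, thereby reducing the 0-cycle approximation problem to a question about rational points simultaneously close to prescribed local ones.

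The heart of every known special case is a fibration and moving argument: one fibres $V$ over a base $B$ (typically $\P^1$ or a curve), moves each local $z_v$ within its rational equivalence class so that its support lies in fibres above well-approximated points of $B$, then applies the conjecture (or, when $B$ is a curve, finiteness of the Tate--Shafarevich group of its jacobian) on $B$ and on the generic fibre, and finally glues. This mechanism underlies the work of S. Saito, Colliot-Thélène, Wittenberg, and Harpaz--Wittenberg referred to in the introduction, and any plausible attack on the general conjecture would have to be a substantial refinement of this circle of ideas.

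For the products $V=X\times Y$ actually treated by this paper, the natural move is to apply the fibration philosophy to a projection, say $X\times Y\to X$, whose fibres are copies of $Y$ over residue fields of closed points. The compatibility statement requires a simultaneous approximation on both factors: given $(z_v)$ on $X\times Y$ orthogonal to $\Br(X\times Y)$, one wants to produce global cycles on $X$ and on $Y$ whose combined data realize the approximation on the product. The paper's Lemma \ref{keylemma}, flagged as the main new ingredient, must provide precisely this decoupling, converting a local 0-cycle on $X\times Y$ into a matched pair of local 0-cycles on $X$ and on $Y$ in a way compatible with \BM~evaluations, so that the hypotheses on each factor (plus the geometric input that $X$ and $Y$ are rationally connected, K3, Kummer, or at most one a curve) can be applied independently and then reassembled.

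The main obstacle, both for the conjecture itself and for any propagation to products, is that $\Br(X\times Y)$ is strictly larger than the subgroup generated by $\Br(X)$ and $\Br(Y)$: there are genuine product classes arising from cup products involving $\H^1$ of the factors, and potentially further transcendental contributions. Controlling approximation with respect to the \emph{full} Brauer group of the product, when the hypothesis only gives control on each factor, is the hardest point; this is presumably why the paper must restrict to factors whose geometry (rationally connected, K3, Kummer, or one positive-genus curve) tames the structure of $\Br(X\times Y)$ enough to reduce the Brauer--Manin pairing on the product to data on the individual factors. Beyond such well-behaved building blocks, the conjecture remains, as it has for decades, the principal open challenge.
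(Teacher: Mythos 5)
The statement you were asked to prove is stated in the paper as a \emph{conjecture} (of Kato--Saito and Colliot-Th\'el\`ene); the paper offers no proof of it and does not claim one. Its theorems only establish \emph{compatibility} of the conjecture with taking products, under strong hypotheses on each factor. Your proposal is correctly self-aware on this point, but it is therefore not a proof: it is a survey of the fibration-method strategy behind the known cases, and it contains no argument that would close the conjecture for a general proper smooth variety. The genuine gap is the conjecture itself --- in particular, there is no known mechanism for producing global $0$-cycles on an arbitrary variety from Brauer--Manin-orthogonal local data; every known case relies on special geometry (curves with finite $\sha$ of the jacobian, rationally connected fibrations, homogeneous spaces), and nothing in your sketch supplies a substitute.

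Two smaller corrections to your description of the paper's actual machinery, in case you pursue the product case. First, the ``decoupling'' of a local point on $X\times Y$ into Brauer--Manin-orthogonal data on $X$ and on $Y$ is done by the Balestrieri--Newton inclusion (Lemma \ref{BNlem}) applied to the two projections; Lemma \ref{keylemma} goes in the \emph{opposite} direction, recombining global classes $x\in\CH_0(X)$ and $y\in\CH_0(Y)$ into the intersection class $p_1^*(x)\cap p_2^*(y)$ on $X\times Y$, and it only works under the restrictive hypothesis that each local class $z_v$ is represented by a $k_v$-rational point --- which is why the proof first fibers over the curve, applies Hilbert irreducibility to land on a single closed point $\theta$, and works with genuine rational points of the fiber $Z_{k(\theta)}$. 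Second, your worry about $\Br(X\times Y)$ exceeding the subgroup generated by $\Br(X)$ and $\Br(Y)$ is legitimate for the general conjecture but is not where the paper's difficulty lies: the hypotheses on the factors are only ever invoked through orthogonality to the images of $\Br(X)$ and $\Br(Y)$, upgraded to the full Brauer groups of $X_{k(\theta)}$ and $Y_{k(\theta)}$ via the assumed surjectivity of $\Br(V)/\Br_0(V)\to\Br(V_K)/\Br_0(V_K)$, so the transcendental or ``product'' classes of $\Br(X\times Y)$ never need to be controlled directly.
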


We remark that the subgroup $\Br_0(V)$ gives no contribution to the pairing with a family of local 0-cycles \emph{of the same degree}. This will be the case in our proofs and we will require the finiteness of $\Br(V)/\Br_0(V)$ as an assumption.

\subsection{Weak approximation for 0-cycles}\label{WABMnotion}
Let $V$ be a proper smooth geometrically integral variety defined over a number field $k$. Let $\delta$ be an integer.
\begin{defn}
\begin{enumerate}\item[]
\item We say that $V$ satisfies \emph{weak approximation for 0-cycles of degree $\delta$} if, for any positive integer $n$ and for any finite set $S$ of places of $k$, given a family of local 0-cycles $(z_v)_{v\in\Omega_k}$ of degree $\delta$ on $V$ then there exists a global 0-cycle $z=z_{n,S}$ of degree $\delta$ such that $z$ and $z_v$ has the same image in $\CH_0(V_{k_v})/n$ for all $v\in S$.
\item\label{def2} We say that $V$ satisfies \emph{weak approximation with \BMo~for 0-cycles of degree $\delta$} if the same conclusion holds for all those families $(z_v)_{v\in\Omega_k}$ of 0-cycles of degree $\delta$ orthogonal to the Brauer group $\Br(V)/\Br_0(V)$.
\item Let $c$ be a positive integer, we also say that $V$ satisfies \emph{weak approximation with $c$-primary \BMo~for 0-cycles of degree $\delta$} if only the $c$-primary part of $\Br(V)/\Br_0(V)$ is considered in the previous definition (\ref{def2}).
\end{enumerate}
\end{defn}

There are very close relations between the notion of weak approximation for 0-cycles and the exactness of \E for $V$. We summarise several known results for the convenience of the reader.
\begin{itemize}
\item[-] Suppose that  \E is exact for $V$, if there exists a family of local 0-cycles of degree $1$ orthogonal to $\Br(V)$ then there exists a global 0-cycle of degree $1$ on $V$, cf. \cite[Remarque 1.1(iii)]{Wittenberg}.
\item[-] Assuming the existence of a global 0-cycle of degree $1$, the exactness of \E for $V$ implies that $V$ satisfies weak approximation with \BMo~for 0-cycles of degree $\delta$ for any integer $\delta$, cf. \cite[Proposition 2.2.1]{Liang4}.
\item[-] The exactness of \E for $V$ implies that $V$ satisfies weak approximation with \BMo~for 0-cycles of degree $1$. This is a consequence of the previous two statements.
\item[-] Suppose that $V$ is a geometrically rationally connected variety. If $V_K$ satisfies weak approximation with \BMo~for 0-cycles of degree $1$ for any finite extension $K$ of $k$, then \E is exact for $V$, cf. \cite[Theorem A]{Liang4}. If we only require the condition on $V_K$ hold for finite extensions that are linearly disjoint from a certain prefixed extension of $k$, the conclusion remains valid, cf. \cite[Theorem 2.1]{Liang5}.
\end{itemize}

The invariance of the exactness of \E and the weak approximation property for 0-cycles under the product with the projective line is well known. This can also be viewed as a baby case of our main result.
\begin{lem}\label{homotopylemma}
Let $V$ be a proper smooth geometrically integral variety defined over a number field $k$. Then
\begin{itemize}
\item[-] The variety $V$ satisfies weak approximation with \BMo~for 0-cycles of degree $\delta$ if and only if $\P^1\times V$ satisfies weak approximation with \BMo~for 0-cycles of degree $\delta$,
\item[-] the  sequence \E is exact for $V$ if and only if it is exact for $\P^1\times V$.
\end{itemize}
\end{lem}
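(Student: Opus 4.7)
The plan is to identify the complex $(\textup{E})$ for $\P^1\times V$ with the complex $(\textup{E})$ for $V$ term by term, and similarly for the data governing weak approximation of 0-cycles of degree $\delta$. Let $p\colon\P^1\times V\to V$ denote the projection. Three standard ingredients are needed. First, the projective bundle formula gives a pushforward isomorphism $p_*\colon\CH_0(\P^1\times V)\xrightarrow{\sim}\CH_0(V)$, whose inverse is $z\mapsto\{t_0\}\times z$ for any fixed $t_0\in\P^1(k)$; it preserves degree, is compatible with arbitrary base change (so in particular descends to $\CH_0'$ at real places, since $\textup{N}_{\mathbb{C}|\mathbb{R}}$ commutes with $p_*$ by flat-proper base change), and hence with the functor $\varprojlim_n(-)/n$. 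Second, the homotopy invariance of Brauer groups for smooth varieties provides an isomorphism $p^*\colon\Br(V)\xrightarrow{\sim}\Br(\P^1\times V)$, which sends $\Br_0$ to $\Br_0$. Third, the projection formula $\inv_v(p^*b(z_v))=\inv_v(b(p_*z_v))$ identifies the \BM~pairing on $\P^1\times V$ with that on $V$.

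Combined, these three facts identify the complex $(\textup{E})$ for $\P^1\times V$ term by term with the complex $(\textup{E})$ for $V$; exactness of one is therefore equivalent to exactness of the other, giving the second bullet. For the first bullet, given a family $(z_v)_{v\in\Omega_k}$ of local 0-cycles of degree $\delta$ on $\P^1\times V$ orthogonal to $\Br(\P^1\times V)/\Br_0$, the pushforward family $(p_*z_v)_v$ has degree $\delta$ on $V$ and remains orthogonal to $\Br(V)/\Br_0$ by the projection formula; applying weak approximation with \BMo~on $V$ yields a global 0-cycle $z$ of degree $\delta$ approximating $(p_*z_v)$ in each $\CH_0(V_{k_v})/n$ for $v\in S$, and then $\{t_0\}\times z$ approximates $(z_v)$ in $\CH_0((\P^1\times V)_{k_v})/n$ via the pushforward isomorphism. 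Conversely, lift $(z_v)$ on $V$ to $(\{t_0\}\times z_v)$ on $\P^1\times V$ (a degree $\delta$ family still orthogonal to $\Br/\Br_0$ by the same projection formula), apply the hypothesis on $\P^1\times V$ to produce a global $w$ of degree $\delta$, and push forward to obtain $p_*w$ on $V$ with the required approximation.

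Since the argument is a formal consequence of homotopy invariance of $\CH_0$ and of $\Br$ together with the projection formula for the \BM~pairing, there is no genuine obstacle; the only mildly delicate verification is at archimedean places, where complex places contribute trivially and real places require the commutation of $\textup{N}_{\mathbb{C}|\mathbb{R}}$ with $p_*$ noted above. The essential content is simply writing down the correct isomorphism of diagrams between the two copies of $(\textup{E})$ and of the BM-orthogonality conditions.
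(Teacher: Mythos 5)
Your proof is correct and is essentially the paper's argument, which simply invokes functoriality of $\Br$, of $\CH_0$, and of the sequence \E applied to the projection $\P^1\times V\to V$ together with its obvious section $v\mapsto(t_0,v)$; you have merely spelled out the details (homotopy invariance of $\CH_0$ and $\Br$, the projection formula for the \BM~pairing, and compatibility with $\CH_0'$ at real places). No gaps.
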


\begin{proof}
It follows from the functoriality  of the Brauer group, of the Chow group of 0-cycles, and of the sequence $(\textup{E})$, applied to the natural projection $\P^1\times V\To V$ with an obvious section.
\end{proof}

\section{Main results and consequences}\label{mainresult}

Our main result is the following theorem, which roughly says that the property of  weak approximation with \BMo~for 0-cycles of degree $1$ is compatible with respect to products among certain classes of varieties including geometrically rationally connected varieties, K3 surfaces, and curves if appeared only once in a product.
\begin{thm}\label{mainthm}
Let $\delta$ be an integer.
Let $C$ be a smooth proper geometrically integral curve defined over a number field $k$. Assume that $C$ satisfies weak approximation with \BMo~for 0-cycles of degree $\delta$.
Let $Z$ be a finite product of proper smooth geometrically integral $k$-varieties, each of whose factor $V$ verifies the following conditions:
\begin{itemize}
\item[-] the groups $\Br(V_{\bar{k}})^{\Gamma_k}$ and $\Br_1(V)/\Br_0(V)$ are finite;
\item[-] for an arbitrary positive integer $d_0$, there exists a finite extension $k'$ of $k$ such that for any extension $K$ of $k$ of degree $d_0$ that is linearly disjoint from $k'$ over $k$, the base change morphism $\Br(V)/\Br_0(V)\To\Br(V_K)/\Br_0(V_K)$ is surjective;
\item[-] there exists a finite extension $k'$ of $k$ such that for any finite extension $K$ of $k$ that is linearly disjoint from $k'$ over $k$, the $K$-variety $V_K$ satisfies weak approximation with \BMo~ for 0-cycles of degree $1$.
\end{itemize}
Then $C\times Z$ satisfies weak approximation with \BMo~for 0-cycles of degree $\delta$.
\end{thm}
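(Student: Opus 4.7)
The strategy is to fiber $C\times Z$ over $C$, apply the weak approximation hypothesis on $C$, and then work fiberwise over each closed point of the resulting 0-cycle on $C$ using the hypotheses on the factors of $Z$. Fix a positive integer $n$, a finite set $S\subset\Omega_k$, and a family $(z_v)_{v\in\Omega_k}$ of local 0-cycles of degree $\delta$ on $C\times Z$ orthogonal to $\Br(C\times Z)/\Br_0(C\times Z)$. The goal is to produce a global 0-cycle on $C\times Z$ of degree $\delta$ whose class in $\CH_0((C\times Z)_{k_v})/n$ agrees with that of $z_v$ for every $v\in S$.

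Project to $C$ via $\pi\colon C\times Z\to C$. By functoriality of the \BM~pairing applied to the pullback $\pi^*\colon\Br(C)\to\Br(C\times Z)$, the family of pushforwards $(\pi_*z_v)_v$ has degree $\delta$ and is orthogonal to $\Br(C)/\Br_0(C)$. The hypothesis on $C$ then yields a global 0-cycle $z^C=\sum_i n_i P_i$ on $C$ of degree $\delta$ approximating $(\pi_*z_v)_v$ modulo $n$ at $S$. A density argument of Chebotarev type, combined with a moving lemma that lets us perturb $z^C$ within its rational equivalence class modulo $n$, allows us further to arrange that each residue field $K_i:=k(P_i)$ is linearly disjoint over $k$ from the finite extension $k'$ appearing in the hypotheses on the factors of $Z$.

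Lemma \ref{keylemma} now lets us convert the local data $(z_v)_v$ on $C\times Z$ into, for each $i$, a family of local 0-cycles $(w_{v,i})_v$ of degree $1$ on $Z_{K_i}$ over the completions of $K_i$ above the places of $S$, with the property that $\sum_i n_i(P_i\times w_{v,i})$ recovers $z_v$ modulo $n$. The orthogonality of $(z_v)_v$ to $\Br(C\times Z)/\Br_0(C\times Z)$ descends to orthogonality of $(w_{v,i})_v$ to $\Br(Z_{K_i})/\Br_0(Z_{K_i})$: this uses the surjectivity of the base change map $\Br(V)/\Br_0(V)\to\Br(V_{K_i})/\Br_0(V_{K_i})$ for every factor $V$ of $Z$ (available because $K_i$ is linearly disjoint from $k'$), together with the finiteness of $\Br(V_{\bar k})^{\Gamma_k}$ and $\Br_1(V)/\Br_0(V)$, which jointly control $\Br(C\times Z)$ modulo $\pi^*\Br(C)$ via a K\"unneth-type decomposition. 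Applying the weak approximation hypothesis on $Z_{K_i}$, we obtain a global 0-cycle $w_i$ on $Z_{K_i}$ of degree $1$ approximating $(w_{v,i})_v$ modulo $n$. The 0-cycle $z:=\sum_i n_i(P_i\times w_i)$, regarded on $C\times Z$ via the closed embeddings $\{P_i\}\times Z_{K_i}\hookrightarrow C\times Z$, is then the desired global 0-cycle of degree $\delta$.

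The principal obstacle is the \BM~descent step: one must verify that the decomposition of the pairing on $C\times Z$ into contributions absorbed by $z^C$ (from $\pi^*\Br(C)$) and contributions from each fiber $Z_{K_i}$ is compatible with the overall orthogonality condition. This is precisely where Lemma \ref{keylemma} plays its crucial role, and where all three hypotheses on each factor $V$ of $Z$ are jointly needed. When $Z$ has more than one factor, the argument proceeds by induction on the number of factors: after treating the base case $Z=V$, one peels off one factor at a time, using that the hypotheses on the individual factors are tailored to propagate through this procedure (for instance, by invoking the hypothesis after further base change to the residue fields of the closed points produced at each stage).
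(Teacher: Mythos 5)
Your overall skeleton (fiber over $C$, approximate on the base, solve on the fibers, glue) matches the paper's, but the proposal skips the technical core of the argument and misassigns the role of Lemma \ref{keylemma}. The central difficulty is this: a local 0-cycle $z_v$ on $C\times Z$ is supported at arbitrary closed points and there is no a priori reason it should decompose as $\sum_i n_i(P_i\times w_{v,i})$ over the support of a chosen global 0-cycle on $C$. The paper manufactures such a decomposition by a chain of steps you omit: add a large multiple of a fixed effective cycle and invoke the relative moving lemma so that each $z_v$ becomes effective with \emph{separable} image in $C$; then use Ekedahl's effective Hilbert irreducibility theorem to replace the global cycle on $C$ by a \emph{single} closed point $\theta$ in a hilbertian set, simultaneously rationally equivalent to and $v$-adically close to the pushforwards $f_*\tau_v$; and finally use the implicit function theorem (smoothness of $f$ plus separability of $f_*\tau_v$) to deform each $\tau_v$ so that it sits exactly on the fiber $f^{-1}(\theta)\simeq Z_{k(\theta)}$ and there splits as a sum of local \emph{rational points} $\tau'_w$ indexed by places $w$ of $k(\theta)$. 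A Chebotarev-style density argument alone cannot produce this: you need the approximation at the places of $S$ in the symmetric product, and you need effectivity and separability first (your $z^C=\sum_i n_iP_i$ may have negative coefficients, and nothing forces the local cycles to lie over the $P_i$).

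Lemma \ref{keylemma} does not ``convert local data on $C\times Z$ into degree-one cycles on the fibers''; it does the opposite job at the very end. After solving the approximation problem separately on the factors $X_{k(\theta)}$ and $Y_{k(\theta)}$ (via Lemma \ref{BNlem} and the surjectivity of $\Br(V)/\Br_0(V)\to\Br(V_{k(\theta)})/\Br_0(V_{k(\theta)})$), one must assemble the two global degree-one cycles $\alpha,\beta$ into a single global cycle on the product $Z_{k(\theta)}$ approximating $\tau'_w$; this is exactly Lemma \ref{keylemma}, and its hypothesis that each local class be represented by a rational point is indispensable (see Remark \ref{remWA0-cyc1}) and is only available because the implicit-function-theorem step produced genuine local rational points on the fiber. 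Your proposal applies the weak approximation hypothesis directly to $Z_{K_i}$, but the hypotheses are on the individual factors, so this gluing step cannot be avoided even in the two-factor base case. Without the effectivity/separability/single-point reduction, the hypothesis of Lemma \ref{keylemma} fails and the argument does not close.
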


\begin{rem}
As remarked in \S \ref{BMpairing} and \S \ref{WABMnotion}, the hypotheses on the curve $C$ are verified when the Tate\textendash Shafarevich group of its jacobian $\textup{Jac}(C)$ is finite and if $C$ admits a global 0-cycle of degree $1$. When $\delta=1$, the existence of a global 0-cycle of degree $1$ is also a consequence of the finiteness of $\sha(\textup{Jac}(C))$ assuming the existence of a family of local 0-cycles of degree $1$ orthogonal to the Brauer group.
\end{rem}

\begin{rem}\label{exampleforBr}
When $V$ is a geometrically rationally connected variety, the finiteness condition on Brauer groups is verified. According to \cite[Proposition 3.1.1]{Liang4}, there exists a finite extension $k'$ of $k$, even independent of the integer $d_0$, such that for any extension $K$ of $k$ of degree $d_0$ linearly disjoint from $k'$ over $k$ the base change morphism $\Br(V)/\Br_0(V)\To\Br(V_K)/\Br_0(V_K)$ is  surjective.

When $V$ is a K3 surface, the finiteness of Brauer groups is proved by A. Skorobogatov and Yu. Zarhin \cite[Theorem 1.2]{SkZar08}. For an arbitrary positive integer $d_0$, the existence of an extension $k'$ in the hypothesis for the surjectivity of comparison of Brauer groups is proved by E. Ieronymou, we refer to the proof of \cite[Theorem 1.2]{Ieronymou19}.

When $V$ is a Kummer variety, the finiteness assumption on the Brauer groups is verified thanks to a result of A. Skorobogatov and Yu. Zarhin \cite[Corollary 2.8]{SkZarKummer}. But the comparison assumption on the Brauer groups under extension of the ground field may not hold. A modified comparison result is proved by F. Balestrieri and R. Newton in \cite{BaleNewton19}. A variant of our theorem still holds. We refer to \S \ref{RkKummer} after the proof of the theorem, where we explain how to adapt the statement to Kummer varieties.
\end{rem}

\begin{cor}\label{productBM}
Let $X$ and $Y$ be proper smooth geometrically integral varieties defined over a number field $k$. Suppose that $V=X$ and $Y$ verify the following conditions:
\begin{itemize}
\item[-] the groups $\Br(V_{\bar{k}})^{\Gamma_k}$ and $\Br_1(V)/\Br_0(V)$ are finite;
\item[-] for an arbitrary positive integer $d_0$, there exists a finite extension $k'$ of $k$ such that for any extension $K$ of $k$ of degree $d_0$ that is linearly disjoint from $k'$ over $k$, the base change morphism $\Br(V)/\Br_0(V)\To\Br(V_K)/\Br_0(V_K)$ is surjective;
\item[-] there exists a finite extension $k'$ of $k$ such that for any finite extension $K$ of $k$ that is linearly disjoint from $k'$ over $k$, the $K$-variety $V_K$ satisfies weak approximation with \BMo~ for 0-cycles of degree $1$.
\end{itemize}
(The first two conditions are verified for example by geometrically rationally connected varieties and K3 surfaces.)

Then  $X\times Y$  verifies weak approximation with \BMo~for 0-cycles of any degree.

In particular, if $X$ and $Y$ are geometrically rationally connected varieties such that \E is exact for $X_K$ and $Y_K$ for every finite extension $K$ of $k$, then \E is also exact for $X\times Y$.
\end{cor}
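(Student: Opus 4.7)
The plan is to deduce the corollary directly from Theorem \ref{mainthm}, combined with Lemma \ref{homotopylemma} and the standard bridge between the exactness of \E and weak approximation with \BMo~for 0-cycles of degree $1$ recalled in \S \ref{WABMnotion}. For the first assertion, I would apply Theorem \ref{mainthm} taking the curve $C=\P^1_k$ and the product $Z=X\times Y$. The projective line $\P^1$ satisfies weak approximation (hence weak approximation with \BMo) for 0-cycles of every degree $\delta$, since its Chow group of 0-cycles is detected by the degree, and the factors $X$ and $Y$ of $Z$ satisfy the three listed hypotheses by assumption. The theorem then yields weak approximation with \BMo~for 0-cycles of degree $\delta$ on $\P^1\times X\times Y$, and Lemma \ref{homotopylemma} applied with $V=X\times Y$ transfers this property to $X\times Y$ itself. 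Letting $\delta$ vary gives the first assertion of the corollary.

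For the "in particular" statement, I would first verify that if $X$ and $Y$ are geometrically rationally connected and \E is exact for $X_K$ and $Y_K$ over every finite extension $K/k$, then they satisfy the three hypotheses of the corollary. The finiteness of $\Br(V_{\bar k})^{\Gamma_k}$ and $\Br_1(V)/\Br_0(V)$ is standard for geometrically rationally connected $V$; the surjectivity of base change on $\Br(V)/\Br_0(V)$ under linearly disjoint extensions is provided by \cite[Proposition 3.1.1]{Liang4}, as recalled in Remark \ref{exampleforBr}; and weak approximation with \BMo~for 0-cycles of degree $1$ on $V_K$ for any finite extension $K/k$ follows from the exactness of \E over $K$ by the third bullet of \S \ref{WABMnotion}. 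Hence the first assertion applies and yields weak approximation with \BMo~for 0-cycles of every degree on $X\times Y$.

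To upgrade this to the exactness of \E for $X\times Y$, I would observe that the hypotheses on $X$ and $Y$ are themselves stable under arbitrary finite base change $k\subset K$, so running the same argument over every finite extension $K/k$ produces weak approximation with \BMo~for 0-cycles of degree $1$ on $(X\times Y)_K$. Since $X\times Y$ is itself geometrically rationally connected (as a product of such), the criterion \cite[Theorem A]{Liang4} recalled in the last bullet of \S \ref{WABMnotion} then yields the exactness of \E for $X\times Y$. I do not anticipate a serious obstacle in this corollary: all the substantive work is packed into Theorem \ref{mainthm}, and the corollary amounts to feeding $\P^1$ as the curve factor and then bookkeeping the known equivalences between weak approximation with \BMo~for 0-cycles of degree $1$ and the exactness of \E on geometrically rationally connected varieties.
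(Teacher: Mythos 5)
Your proposal is correct and follows essentially the same route as the paper: apply Theorem \ref{mainthm} with $C=\P^1$ and $Z=X\times Y$, transfer back via Lemma \ref{homotopylemma}, and for the geometrically rationally connected case use the equivalences from \S\ref{WABMnotion} (exactness of \E over all finite extensions gives weak approximation with \BMo~for degree-$1$ 0-cycles as input, and \cite[Theorem A]{Liang4} converts the output back into exactness of \E for the product). Your write-up simply makes explicit the bookkeeping that the paper's two-line proof leaves implicit.
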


\begin{proof} The first statement follows from Theorem \ref{mainthm} applied to $C=\P^1$ and Lemma \ref{homotopylemma}.

When both varieties are geometrically rationally connected, according to \S \ref{WABMnotion}, the exactness of \E implies the property of weak approximation with \BMo~for 0-cycles of degree $1$, and the converse is also true if we allow finite extensions of the ground field.
\end{proof}

\begin{cor}\label{productWA}
Let $X$ and $Y$ be proper smooth geometrically integral varieties defined over a number field $k$. Suppose that after every finite extension of the ground field both varieties satisfy weak approximation for 0-cycles of degree $1$. Then the same property holds for $X\times Y$.
\end{cor}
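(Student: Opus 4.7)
The plan is to follow the same outline as the proof of Theorem~\ref{mainthm} (specialized, for instance, to the case $C=\P^1$ and $Z=X\times Y$, which parallels the first assertion of Corollary~\ref{productBM}), while observing that every appeal to the Brauer--Manin pairing can be dispensed with. The reason is that the hypothesis here is \emph{unconditional} weak approximation on each $V_K$ (with $V\in\{X,Y\}$): one may approximate an arbitrary family of local 0-cycles of degree~$1$, not only those orthogonal to $\Br(V_K)$. In particular, the finiteness requirements on $\Br(V_{\bar{k}})^{\Gamma_k}$ and $\Br_1(V)/\Br_0(V)$, and the base-change surjectivity requirement on $\Br(V)/\Br_0(V)$, all of which featured in Theorem~\ref{mainthm}, play no role here.

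Concretely, fix a family $(z_v)_{v\in\Omega_k}$ of local 0-cycles of degree~$1$ on $X\times Y$, a finite subset $S\subset\Omega_k$, and a positive integer~$n$. The first step is to invoke the key observation, Lemma~\ref{keylemma}, to modify each $z_v$ (up to rational equivalence, modulo multiples of $n$) so that it sits compatibly with the first projection $p_X\colon X\times Y\to X$: the family gets refined into one supported on a fiber system over a 0-cycle on $X$ of degree~$1$, in a way that allows independent handling of the $X$-component and of the fibers. The second step is to push forward along $p_X$ to obtain a family $(p_{X*}z_v)_v$ of local 0-cycles of degree~$1$ on $X$, and apply the unconditional weak approximation hypothesis for $X$ over $k$ to produce a global 0-cycle $z_X$ of degree~$1$ agreeing with $p_{X*}z_v$ modulo $n$ in $\CH_0(X_{k_v})$ for each $v\in S$. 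The third step is to restrict attention to the fiber $p_X^{-1}(\mathrm{supp}(z_X))=\bigsqcup_i Y_{k_i}$, where the $k_i$ are the residue fields of the closed points of $z_X$. On each $Y_{k_i}$ the hypothesis supplies unconditional weak approximation for 0-cycles of degree~$1$; applying it to the fiberwise local data inherited from Lemma~\ref{keylemma} yields a global 0-cycle of degree~$1$ on $\bigsqcup_i Y_{k_i}$. Pushing it forward to $X\times Y$ via the closed immersion of the fiber gives the desired global 0-cycle of degree~$1$ approximating $(z_v)_v$ modulo $n$ at the places in $S$.

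The heart of the argument, and the main obstacle, is the opening step: the verification via Lemma~\ref{keylemma} that an arbitrary family of local 0-cycles of degree~$1$ on $X\times Y$ can, up to $n$-rational equivalence, be reshaped into data on a fiber over a 0-cycle on $X$ in a way that is preserved by the subsequent global construction on $X$. Once this fibration-like reduction is granted, the remainder is a clean two-level application of the weak approximation hypothesis on the factors, with no Brauer-theoretic input needed, and the conclusion for $X\times Y$ follows.
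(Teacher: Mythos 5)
Your opening sentence --- specialize the proof of Theorem~\ref{mainthm} to $C=\P^1$ and $Z=X\times Y$ and delete every Brauer-theoretic step --- is exactly the paper's proof, and your observation that the unconditional hypothesis makes the finiteness and base-change conditions on Brauer groups irrelevant is correct. But the concrete outline you then give misassigns the roles of the lemmas and leaves the genuinely hard step unproved. Lemma~\ref{keylemma} is not a device for ``reshaping'' an arbitrary family of local $0$-cycles into data supported on fibers over a $0$-cycle of $X$; it is the \emph{final synthesis} step, and it only applies under the restrictive hypothesis that each local class $z_v$ is already represented by a $k_v$-\emph{rational point} of $X\times Y$ (Remark~\ref{remWA0-cyc1} stresses that without this hypothesis the identity $p_1^*{p_1}_*(z_v)\cap p_2^*{p_2}_*(z_v)=z_v$ has no reason to hold). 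Your ``third step'' is likewise not well defined: $z_X$ is a degree-one $0$-cycle, in general a signed combination of several closed points with various residue fields, and an arbitrary local $0$-cycle $z_v$ on $X\times Y$ carries no ``fiberwise local data'' over $\mathrm{supp}(z_X)$ --- nothing forces $z_v$ to lie over, or even near, those fibers.

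What actually bridges this gap in the paper is the fibration over the auxiliary curve: the relative moving lemma (Lemma~\ref{movinglemma2}) makes the local cycles effective and separable with separable projection to $\P^1$, Proposition~\ref{hilbertirred} (Ekedahl's effective Hilbert irreducibility plus Riemann--Roch) produces a single closed point $\theta$ of the curve approximating all these projections simultaneously, and the implicit function theorem slides the local cycles onto the fiber over $\theta$, where they become genuine local rational points of $(X\times Y)_{k(\theta)}$ over the completions of the finite extension $k(\theta)$ --- this is precisely where the hypothesis ``after every finite extension of the ground field'' is consumed, since $[k(\theta):k]$ is large. Only then does weak approximation on $X_{k(\theta)}$ and $Y_{k(\theta)}$ apply, and only then is Lemma~\ref{keylemma} usable to recombine the two global approximants (with Lemma~\ref{homotopylemma} removing the $\P^1$ factor at the end). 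Since you explicitly identify this reduction as ``the main obstacle'' and do not supply it, and the tool you propose for it cannot perform it, the proposal has a genuine gap at its central step.
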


\begin{proof}
Ignoring all arguments about Brauer groups (pretending that they are all $0$), the whole proof applies \textemdash~ we only need to consider the case with $C=\P^1$ whose Brauer group does not obstruct local to global properties.
\end{proof}

\begin{rem}\label{remWA0-cyc2}
This result is  predictable, but to the best knowledge of the author, probably it did not appear in the literature. One may also guess that the statement without extension of the ground field may possibly be valid as well. But this is unclear to the author, though its analogue for rational points is trivial. See also Remark \ref{remWA0-cyc1}.
\end{rem}

\begin{cor}
Let $X\To C$ be a dominant morphism between smooth proper geometrically integral varieties defined over a number field $k$.

Assume that $C$ is a curve such that the Tate\textendash Shafarevich group of its jacobian is finite.

Assume that the generic fiber is a product of geometrically rationally connected varieties over $k(C)$ and that each  fiber over a closed point $\theta$ in a certain non empty open subset (or a hilbertian subset, see \S \ref{Hil} for definition) of $C$ is a product of varieties that verify the exactness of \E after arbitrary finite extension of $k(\theta)$.

Then \E is also exact for $X$.
\end{cor}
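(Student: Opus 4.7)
My plan is to combine the fibration method over curves with finite Tate\textendash Shafarevich jacobian (the main fibration theorem of \cite{Wittenberg}) with the product theorem \ref{mainthm} of the present paper, applied to the fibers.

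First, since a finite product of geometrically rationally connected varieties is again geometrically rationally connected, the generic fiber $X_{k(C)}$ is geometrically rationally connected over $k(C)$. Together with the finiteness of $\sha(\mathrm{Jac}(C))$, this places us in the setting of the fibration theorem in \cite{Wittenberg}, which reduces the exactness of \E for $X$ to the exactness of \E for the fibers $X_\theta$ (after suitable further base changes of the ground field) as $\theta$ ranges over a Hilbertian, respectively a non-empty open, subset of closed points of $C$. By shrinking, one may assume this subset is contained in the one provided in our hypothesis.

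For each such closed point $\theta$, the fiber $X_\theta = \prod_i V_i$ is a product of $k(\theta)$-varieties $V_i$ verifying the exactness of \E after arbitrary finite extension of $k(\theta)$. According to the summary recalled in \S \ref{WABMnotion}, this property is equivalent to weak approximation with \BMo~for 0-cycles of degree $1$ after arbitrary finite extension, which is precisely the third hypothesis appearing in Theorem \ref{mainthm}. The first two hypotheses, namely finiteness of the relevant Brauer groups and surjectivity of the base change map on $\Br/\Br_0$, are verified, by Remark \ref{exampleforBr}, in the cases of interest (geometrically rationally connected varieties, K3 surfaces, or, with the variant discussed in \S \ref{RkKummer}, Kummer varieties). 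Iterated application of Corollary \ref{productBM}, or equivalently Theorem \ref{mainthm} with $C=\P^1$ applied repeatedly to peel off one factor at a time, then yields the exactness of \E for $X_\theta$ after every finite extension of $k(\theta)$. Feeding this back into the fibration theorem concludes the proof.

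The main obstacle is a careful matching of hypotheses between the fibration theorem and Theorem \ref{mainthm}: one must track how the Hilbertian subset produced by the former intersects the set on which the fiber hypotheses are known, and must verify that the product structure on $X_\theta$, together with the conditions on each factor $V_i$, is preserved under the further base changes demanded by the fibration argument. Modulo this bookkeeping, which is routine thanks to the robustness of the assumptions under finite extensions of the ground field, the proof is a direct composition of the fibration theorem of \cite{Wittenberg} with the new product theorem of this paper.
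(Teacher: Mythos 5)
Your argument is correct and is essentially the paper's own proof: apply the product result (Corollary \ref{productBM}, via Theorem \ref{mainthm}) to the fibers over closed points of a suitable hilbertian subset, then invoke the fibration theorem over a curve with finite $\sha(\textup{Jac}(C))$. The only discrepancy is bibliographic: the paper cites the fibration theorem in the form of Harpaz--Wittenberg \cite[Theorem 8.3]{HarWitt16} rather than \cite{Wittenberg}, and since the generic fiber is a product of geometrically rationally connected varieties the detour through K3 and Kummer cases is unnecessary.
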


\begin{proof}
By Lemma \ref{productBM}, the sequence \E is exact for fibers over all closed points in a certain non empty open subset (or a hilbertian subset) of $C$.
The statement is then a direct consequence of the main result of Y. Harpaz and O. Wittenberg \cite[Theorem 8.3]{HarWitt16}.
\end{proof}

\section{Proof of the main theorem}\label{proofsection}

\subsection{Preliminaries for the proofs}
Here we collect some known results in the literature that will contribute to our proof of the main theorem.
\subsubsection{Moving lemmas}
Recall that an effective 0-cycle is separable if it is a sum of distinct closed points.
Separable 0-cycles behave like rational points, to which it will be possible to apply the fibration method. We need the following moving lemmas to get effective separable 0-cycles from arbitrary 0-cycles.

The following moving lemma for 0-cycles is classic, one can find a proof in \cite[End of \S 3]{CT05},  for the quasi-projective case see also \cite{AK79}.

\begin{lem}\label{movinglemma0}
Let $X$ be a smooth integral variety defined over a field $k$ of characteristic $0$. Let $U$ be a non empty open subset of $X$. Then any 0-cycle on $X$ is rationally equivalent to a 0-cycle supported in $U$.
\end{lem}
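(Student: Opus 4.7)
The plan is to reduce moving on the smooth variety $X$ to moving on a single curve. By $\Z$-linearity it suffices to prove that every closed point $P\in X$ is rationally equivalent to a 0-cycle supported in $U$. Write $Z=X\setminus U$, a proper closed subset. I would first replace $X$ by an affine open neighbourhood of $P$, so as to be able to embed the situation into some $\P^N$.

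Next I would construct an integral curve $C\subset X$ passing through $P$, smooth at $P$, and not contained in $Z$. Taking generic linear sections through $P$ of appropriate codimension and invoking the characteristic-zero Bertini--Kleiman theorem, such a curve exists: Bertini yields smoothness at $P$, and the generic-section argument together with $\dim Z<\dim X$ guarantees that no component of $C$ lies in $Z$, so that $C\cap Z$ is a finite set of closed points. Let $\nu:\tilde{C}\to C$ be the normalisation and $\bar{C}$ the smooth projective completion of $\tilde{C}$. Since $C$ is smooth at $P$, there is a unique closed point $\tilde{P}\in\tilde{C}$ above $P$ with $\kappa(\tilde{P})=\kappa(P)$, and therefore $\nu_{*}[\tilde{P}]=[P]$ in $\CH_0(X)$.

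The remaining step is the moving lemma on the smooth projective curve $\bar{C}$: letting $F=\nu^{-1}(Z)\cup(\bar{C}\setminus\tilde{C})$, which is a finite set of closed points, one shows that $[\tilde{P}]$ is linearly equivalent on $\bar{C}$ to a 0-cycle supported in $\bar{C}\setminus F$. The standard Riemann--Roch trick is to choose an auxiliary effective 0-cycle $Q_0$ supported in $\bar{C}\setminus F$ of degree so large that $h^{0}(\bar{C},\mathcal{O}(\tilde{P}+Q_0))\geq 2$; the complete linear system $|\tilde{P}+Q_0|$ is then a positive-dimensional projective space over $\kappa(P)$, and a generic $\kappa(P)$-rational member is effective and supported in $\bar{C}\setminus F$. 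Subtracting $Q_0$ produces a 0-cycle rationally equivalent to $[\tilde{P}]$ on $\tilde{C}$ and supported in $\tilde{C}\setminus\nu^{-1}(Z)$, and pushing forward by $\nu$ yields the sought 0-cycle on $X$. The main technical subtlety is that the Bertini step and the Riemann--Roch step must both produce objects defined over the residue field $\kappa(P)$, not merely geometrically; the characteristic-zero hypothesis together with the fact that the relevant parameter spaces are $\kappa(P)$-projective spaces (hence have $\kappa(P)$-rational points in any non-empty Zariski open) resolves this, and is the content of the classical arguments in \cite{AK79} and in the cited end of section of \cite{CT05}.
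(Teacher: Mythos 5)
Your argument is correct, and it is precisely the classical proof that the paper itself does not spell out but delegates to its references (Altman--Kleiman, and the end of \S 3 of Colliot-Th\'el\`ene's lectures): reduce by $\Z$-linearity to a single closed point, put a suitable curve through it by Bertini, and move along that curve by Riemann--Roch on its smooth projective model. Your inclusion of $\bar{C}\setminus\tilde{C}$ in the forbidden set $F$ is exactly the point that makes $\div(f)$ supported in $\tilde{C}$, so that its pushforward to the closure of $C$ in $X$ is a legitimate rational equivalence on $X$. Three details are stated too loosely, though all are repaired by the references you cite. First, \emph{linear} sections through $P$ do not suffice when $[k(P):k]$ is large: the $k$-linear system of hyperplanes containing the closed point $P$ may be empty or may be forced to meet the tangent space badly; one must use hypersurface sections of sufficiently large degree containing $P$, which is exactly what the Altman--Kleiman Bertini theorems provide. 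Second, the linear system $|\tilde{P}+Q_0|$ is a projective space over $k$ (more precisely over the field of constants of $\bar{C}$), not over $k(P)$; this is harmless since $k$ is infinite, but the parameter space is not a $k(P)$-variety. Third, $h^0(\bar{C},\mathcal{O}(\tilde{P}+Q_0))\geq 2$ alone does not prevent $|\tilde{P}+Q_0|$ from having base points inside $F$; take $\deg Q_0$ large enough that $\deg(\tilde{P}+Q_0)\geq 2g$, so the system is base-point free, after which avoiding the finite set $F$ is a nonempty open condition on the projective space and admits a $k$-rational solution.
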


On a smooth curve, sufficiently positive 0-cycles are rationally  equivalent to an effective 0-cycle. It follows from the Riemann\textendash Roch theorem.
\begin{lem}\label{movinglemma1}
Let $C$ be a projective smooth geometrically integral curve of genus $g$ defined over a field $k$ of characterisptic $0$. Let $z$ be a 0-cycle of degree $d>2g$ on $C$. Then $z$ is rationally equivalent to an effective  0-cycle $z'$.
\end{lem}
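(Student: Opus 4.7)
The plan is to apply the Riemann--Roch theorem directly. Interpret the 0-cycle $z$ as a divisor $D$ on $C$ of degree $d > 2g$, and show that the $k$-vector space $\H^0(C,\mathcal{O}_C(D))$ is nonzero; any nonzero global section $f$ yields an effective divisor $\div(f) + D$, which is linearly (equivalently rationally) equivalent to $D$ and hence gives the desired $z'$.

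More concretely, I would first recall that on a smooth projective geometrically integral curve over a field $k$ of characteristic $0$, Riemann--Roch is available in the form
\[
\ell(D) - \ell(K - D) = \deg(D) - g + 1,
\]
where $\ell(\cdot) = \dim_k \H^0(C,\mathcal{O}_C(\cdot))$ and $K$ is a canonical divisor (of degree $2g - 2$). Then I would estimate $\deg(K - D) = 2g - 2 - d < -2 < 0$ since $d > 2g$; a divisor of negative degree on a smooth projective geometrically integral curve has no nonzero global sections, so $\ell(K - D) = 0$. Therefore
\[
\ell(D) = d - g + 1 > g + 1 \geq 1.
\]

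Picking any nonzero $f \in \H^0(C,\mathcal{O}_C(D)) \subset k(C)^{\times}$, the divisor $z' := D + \div(f)$ is effective and satisfies $z' \sim z$ in $\ZZ(C)$, which is exactly the conclusion. There is essentially no obstacle here: the only mildly subtle point is that the Riemann--Roch computation and the vanishing $\ell(K-D)=0$ must be carried out over $k$ rather than over $\bar{k}$, but this is automatic since the relevant cohomology groups are $k$-vector spaces that are compatible with base change to $\bar{k}$, and a global section of a line bundle of negative degree vanishes after any field extension. So the proof is a two-line application of Riemann--Roch, and I would present it as such.
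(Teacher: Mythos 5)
Your proof is correct and is exactly the argument the paper has in mind: the paper offers no details beyond the remark that the lemma ``follows from the Riemann--Roch theorem,'' and your two-line Riemann--Roch computation ($\ell(K-D)=0$ since $\deg(K-D)=2g-2-d<0$, hence $\ell(D)=d-g+1\geq 1$) fills that in correctly.
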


J.-L. Colliot-Th\'el\`ene generalised this lemma to a much stronger relative version. We include a sketch of his proof.

\begin{lem}[Relative moving lemma]\label{movinglemma2}
Let $f:X\To C$ be a morphism between projective smooth geometrically integral varieties defined over a field $k$ of characteristic $0$. Assume that $C$ is a curve. Let $z$ be a 0-cycle and $z^\eff$ be an effective 0-cycle on $X$.

Then there exists a positive integer $d_0$ such that for all $d>d_0$ the 0-cycle $z+dz^\eff$ is rationally equivalent on $X$ to an effective 0-cycle $\tau$ whose projection $f_*\tau$ to $C$ is separable.

\end{lem}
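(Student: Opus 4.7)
The plan is to reduce the statement to a moving problem on a smooth curve cut out of $X$, then combine Lemma \ref{movinglemma1} with a general-position argument for separability of the pushforward to $C$. After handling trivial cases (if $f$ is non-dominant every pushforward is concentrated at a single point, and if $\dim X = 1$ the claim reduces directly to Lemma \ref{movinglemma1} applied on a positive-dimensional linear system), I would assume that $f$ is dominant and $\dim X \geq 2$.

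The first step is to construct, by Bertini with base locus, a suitable auxiliary curve. Let $S = \textup{Supp}(z) \cup \textup{Supp}(z^{\eff})$, a finite set of closed points of $X$. Fixing a very ample $\mathcal{O}_X(H)$, for $n$ sufficiently large the linear system of sections of $\mathcal{O}_X(nH)$ vanishing along $S$ is very ample outside $S$ and separates tangent directions at the points of $S$. By successive applications of Bertini, intersecting $\dim X - 1$ sufficiently general such divisors yields a smooth projective geometrically integral curve $Y \subset X$ containing $S$, and a generic choice further ensures that $\phi := f|_Y : Y \To C$ is a finite surjective (hence generically étale, as $\textup{char}\,k = 0$) morphism of curves.

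Now both $z$ and $z^{\eff}$ lie on $Y$. Setting $L_d = \mathcal{O}_Y(z + d z^{\eff})$ of degree $N_d$ growing linearly with $d$, for $d \gg 0$ one has $N_d > 2g(Y) - 2$, so by Riemann--Roch $h^0(Y, L_d) = N_d - g(Y) + 1$ and the complete linear system $|L_d|$ is a non-empty projective space of positive (and unboundedly growing) dimension. Every element is an effective 0-cycle on $Y$ rationally equivalent to $z + d z^{\eff}$ on $Y$, and the equivalence transfers to $\CH_0(X)$ since the latter is defined by modding out $\ZZ(X)$ by principal divisors on 1-dimensional integral closed subschemes of $X$ such as $Y$.

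It remains to select $\tau \in |L_d|$ with $\phi_* \tau$ separable on $C$. Non-separability of the pushforward is a Zariski-closed condition on $|L_d|$, so it suffices to exhibit one such $\tau$. For $d \gg 0$ the linear system $|L_d|$ is base-point-free and separates points, so combining this with the generic étaleness of $\phi$ and the freedom in the Bertini choice of $Y$ one can produce $\tau$ supported on $N_d$ pairwise distinct closed points lying in the étale locus of $\phi$, with pairwise distinct $\phi$-images and with residue fields matching those of their images, which together make $\phi_* \tau$ separable. The hard part is precisely this separability step: $|L_d|$ embeds in $\Sym^{N_d} Y$ as a linear subvariety of dimension much smaller than $N_d$, and one must verify that its image under $\phi_*$ in $\Sym^{N_d} C$ meets the open dense locus of separable 0-cycles. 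This hinges on the unbounded growth of $\dim |L_d|$ in $d$ together with the freedom in the Bertini construction of $Y$, which provides enough flexibility to avoid the closed non-separable locus.
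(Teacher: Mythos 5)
Your overall strategy is the same as the one the paper sketches (and attributes to Colliot-Th\'el\`ene, \cite[Lemmes 3.1 et 3.2]{CT99}): pass to a smooth geometrically integral curve $Y\subset X$ through the supports of $z$ and $z^{\eff}$, use Riemann--Roch on $Y$ to make $z+dz^{\eff}$ effective, and invoke a Bertini-type argument for separability of the pushforward. However, the separability step --- which you yourself single out as ``the hard part'' --- is precisely the content of the lemma, and your argument for it does not go through as written. Knowing that the non-separable locus is closed in $|L_d|$ reduces the problem to exhibiting a single good member, but your two reasons for its existence are not proofs: the separable locus being open and dense in $\Sym^{N_d}C$ is irrelevant, because the image of $|L_d|$ under $\phi_*$ is a proper closed subvariety of $\Sym^{N_d}C$ and a dense open set need not meet it; and ``the freedom in the Bertini construction of $Y$'' is never converted into a concrete general-position statement. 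There is also an internal inconsistency: you claim the case $\dim X=1$ ``reduces directly to Lemma \ref{movinglemma1}'', but that lemma only produces an effective representative and says nothing about $f_*\tau$ being separable; since your main case reduces to exactly this situation ($\phi:Y\to C$ finite), the gap is not localized to a degenerate case. (Your ``trivial'' non-dominant case is in fact not handleable at all --- if $f(X)$ is a point, $f_*\tau$ is supported at one point of large multiplicity --- so the lemma tacitly assumes $f$ dominant, as it is in the paper's application; this is a defect of the statement rather than of your proof, but you should not pretend to dispose of it.)

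The missing argument is the following incidence-correspondence count, which is what Colliot-Th\'el\`ene actually does. Take $d$ large enough that $L_d$ is very ample on $Y$ and embed $Y\hookrightarrow\P^M$ by $|L_d|$, so that members of $|L_d|$ are hyperplane sections. By Bertini in characteristic $0$, the hyperplanes $H$ with $Y\cap H$ non-reduced form a proper closed subset of the dual space $\check{\P}^M$. For the injectivity of $\phi$ on the support: the set of pairs $(y_1,y_2)$ of distinct geometric points of $Y$ with $\phi(y_1)=\phi(y_2)$ is $(Y\times_C Y)\smallsetminus\Delta$, of dimension $\le 1$, and for each such pair the hyperplanes containing both form a linear subspace of codimension $2$ (the embedding separates points); hence the union of these over all pairs has dimension $\le M-1$ in $\check{\P}^M$. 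Since $k$ is infinite, there is a $k$-rational hyperplane $H$ avoiding both bad loci; then $\tau=Y\cap H$ is reduced and $\phi$ is injective on its geometric points, which forces all residue extensions $k(P)|k(\phi(P))$ to be trivial and the images to be pairwise distinct, i.e.\ $\phi_*\tau$ is separable. With this paragraph inserted in place of your appeal to density, your reduction to the curve $Y$ and the transfer of rational equivalence from $\CH_0(Y)$ to $\CH_0(X)$ are fine, and the proof matches the one the paper refers to.
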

\begin{proof}[Sketch of proof.]
First of all one deals with the case where $X$ is a curve. One applies Riemann\textendash Roch theorem to get effective 0-cycles. Bertini's theorem ensures further the separation of the 0-cycle. For higher dimensional $X$, one can find a curve passing through the support of the concerned 0-cycle and hence one reduces to the case for curves. A detailed proof by J.-L. Colliot-Th\'el\`ene can be found in \cite[Lemmes 3.1 and 3.2]{CT99}.
\end{proof}

\subsubsection{Approximation for effective 0-cycles}
When $k$ is a number field, we consider its completions $k_v$ which will be $\mathbb{R}$, $\mathbb{C}$, or a finite extension of $\Q_p$. The set $\Sym_X^\Delta(k_v)$ of $k_v$-rational points of the symmetric product  endowed with a natural topology is identified with the set of effective 0-cycles of degree $\Delta$ on $X_{k_v}$. The following lemma by O. Wittenberg relates the topology on $\Sym_X^\Delta(k_v)$ with our notion of approximation for 0-cycles.
\begin{lem}[{\cite[Lemme 1.8]{Wittenberg}}]\label{WitLem}
Let $X$ be a projective smooth variety defined over $k_v$ and let $\Delta$ be a positive integer. For any positive integer $n$, the map $\Sym_X^\Delta(k_v)\To\CH_0(X_{k_v})/n$ which maps a degree $\Delta$ effective 0-cycle to its class  is locally constant.
\end{lem}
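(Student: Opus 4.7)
\emph{Proof plan.} The strategy is to reduce to the case where $X$ is a curve, and in that case to apply the Abel--Jacobi morphism.

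\emph{Curve case.} Suppose $\dim X=1$. Then $\CH_0(X_{k_v})=\Pic(X_{k_v})$, and the Abel--Jacobi morphism $\Sym^\Delta_X\to\Pic^\Delta_X$ is a morphism of $k_v$-varieties, so it induces a continuous map on $k_v$-points for the $v$-adic topology. Two classes in $\Pic^\Delta_X(k_v)$ coincide modulo $n\Pic(X_{k_v})$ if and only if their difference lies in $nJ(X)(k_v)$, where $J(X)=\Pic^0_X$. Because $k_v$ has characteristic $0$, multiplication by $n$ on the abelian variety $J(X)$ is \'etale, so by the implicit function theorem it is a local diffeomorphism on $k_v$-points; hence $nJ(X)(k_v)$ is open in $J(X)(k_v)$. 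Therefore $J(X)(k_v)/nJ(X)(k_v)$ is discrete, and the composite $\Sym^\Delta_X(k_v)\to\CH_0(X_{k_v})/n$ is a continuous map into a discrete group, hence locally constant.

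\emph{Reduction to the curve case.} For $\dim X\geq 2$, I would embed $X\hookrightarrow\P^N_{k_v}$, replacing the embedding by a Veronese of sufficiently high degree if necessary. Given $z_0\in\Sym^\Delta_X(k_v)$ with geometric support $S_0\subset X(\bar k_v)$, and $z$ close to $z_0$ in the $v$-adic topology (so that its geometric support $S_z$ is close to $S_0$ as an unordered Galois-stable tuple in $X(\bar k_v)$), I would produce a $k_v$-rational linear subspace $L\subset\P^N$ of dimension $N-\dim X+1$ that contains $S_0\cup S_z$ and such that $C:=L\cap X$ is a smooth projective curve. The existence of such an $L$ rests on a Bertini-type argument over $k_v$, in the spirit of the sketch of Lemma \ref{movinglemma2} (compare \cite[Lemmes 3.1 and 3.2]{CT99}); the Veronese re-embedding is used to create enough freedom in the Grassmannian of such linear subspaces to accommodate the finite set $S_0\cup S_z$. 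Both $z_0$ and $z$ are then effective 0-cycles of degree $\Delta$ on $C$, and they remain close in $\Sym^\Delta_C(k_v)$, so the curve case applied to $C$ yields $[z_0]=[z]$ in $\CH_0(C)/n$. Proper pushforward along $C\hookrightarrow X$ then gives $[z_0]=[z]$ in $\CH_0(X_{k_v})/n$, as desired.

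\emph{Main obstacle.} The most delicate point is the uniform Bertini-type construction: one must produce a smooth $k_v$-rational curve in $X$ passing through the geometric supports of both $z_0$ and any sufficiently close $z$, with enough control as $z$ varies in a $v$-adic neighborhood of $z_0$. The non-emptiness over $k_v$ of the relevant locus in the Grassmannian (which is what forces the Veronese re-embedding) together with the openness of the smoothness condition on $L\cap X$ are the essential inputs.
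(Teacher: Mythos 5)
The paper does not actually prove this statement; it is imported verbatim from \cite[Lemme 1.8]{Wittenberg}, so your attempt can only be judged on its own merits. Your curve case contains a repairable imprecision: two effective divisors $D,D_0$ have the same class in $\CH_0(C_{k_v})/n=\Pic(C_{k_v})/n$ if and only if $D-D_0\in n\Pic^0(C_{k_v})$, where $\Pic^0(C_{k_v})$ is the group of \emph{actual} degree-zero divisor classes, i.e.\ the kernel of the obstruction map $J(k_v)=\Pic^0_{C/k_v}(k_v)\to\Br(k_v)$ \textemdash~not merely $D-D_0\in nJ(k_v)$. This matters precisely in your setting, because the curves you produce by slicing pass through closed points of degree $>1$ and generally have no $k_v$-rational point, so $\Pic^0(C_{k_v})\subsetneq J(k_v)$ and $nJ(k_v)\cap\Pic^0(C_{k_v})$ can strictly contain $n\Pic^0(C_{k_v})$. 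The conclusion survives because the image of the obstruction map is killed by the index $I$ of $C$, so $n\Pic^0(C_{k_v})\supseteq nI\cdot J(k_v)$ is still an open subgroup; but as written your argument proves constancy of the wrong invariant.

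The serious gap is in the reduction step, and it is not where you locate it. The existence, for each fixed pair $(z_0,z)$, of a $k_v$-rational linear space $L$ through the supports with $L\cap X$ a smooth curve is indeed available (after a Veronese re-embedding) by Bertini over the infinite field $k_v$, in the spirit of \cite[Lemmes 3.1 et 3.2]{CT99}. The problem is that the curve $C_z=L_z\cap X$ \emph{depends on $z$}, while the curve case only furnishes, for each such $C$, \emph{some} neighborhood of $z_0$ in $\Sym^\Delta_{C}(k_v)$ on which the class in $\CH_0(C)/n$ is constant \textemdash~a neighborhood whose size depends on $C$ (through the open subgroup $n\Pic^0(C_{k_v})\subset J_C(k_v)$). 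You have no control guaranteeing that $z$ lies in the neighborhood attached to the very curve $C_z$ chosen after seeing $z$; the argument is circular. Repairing this requires a genuinely relative argument: work over the parameter space of admissible linear sections through the support of $z_0$, form the relative Jacobian as an abelian scheme over it, and use that multiplication by $n$ on the total space is \'etale, so that its image on $k_v$-points is open \emph{uniformly} in the fibre. Without some such family-wise uniformity (or a different mechanism, e.g.\ a correspondence from $\Sym^\Delta X$ reducing to the rational-point case), the proof is incomplete at its central step.
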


\subsubsection{Finiteness of Brauer groups for products}
In our proof, we will need the finiteness of the concerned Brauer groups. The following proposition, which allow us to deal with products, is proved by F. Balestrieri and R. Newton.

\begin{prop}[{\cite[Proposition 3.1]{BaleNewton19}}]\label{prodBrfinite}
Let $k$ be a number field and let $X$ and $Y$ be projective smooth geometrically integral $k$-varieties. If $\Br(V_{\bar{k}})^{\Gamma_k}$ and $\Br_1(V)/\Br_0(V)$ are finite for $V=X$ and $V=Y$, then they are also finite for $V=X\times Y$. Consequently $\Br(X\times Y)/\Br_0(X\times Y)$ is also finite.
\end{prop}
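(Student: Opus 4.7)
The plan is to decompose the Brauer and Picard groups of $X\times Y$ into contributions from $X$, $Y$, and a ``mixed'' term, then bound each piece using the hypotheses. Working over $\bar k$ and using $\bar k$-points on $X$ and $Y$ to split the Leray spectral sequences for the two projections $p\colon X\times Y\to X$ and $q\colon X\times Y\to Y$, one obtains $\Gamma_k$-equivariant direct-sum decompositions
\begin{align*}
\Pic\bigl((X\times Y)_{\bar k}\bigr) &\;\cong\; \Pic(X_{\bar k})\oplus \Pic(Y_{\bar k})\oplus \Hom\bigl(\textup{Alb}(X_{\bar k}),\Pic^{0}(Y_{\bar k})\bigr),\\
\Br\bigl((X\times Y)_{\bar k}\bigr) &\;\cong\; \Br(X_{\bar k})\oplus \Br(Y_{\bar k})\oplus M(X,Y),
\end{align*}
where the first is the seesaw formula and the second arises from applying the \'etale K\"unneth formula to $H^2(-,\mu_n)$ and comparing with the Kummer sequence in the limit over $n$.

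For finiteness of $\Br_1(X\times Y)/\Br_0(X\times Y)$, I note that Hochschild--Serre gives an injection $\Br_1(V)/\Br_0(V)\hookrightarrow H^1(k,\Pic(V_{\bar k}))$ whose cokernel lies in $H^3(k,\bar k^{\,*})$, a finite group when $k$ is a number field; so under the hypotheses, $H^1(k,\Pic(X_{\bar k}))$ and $H^1(k,\Pic(Y_{\bar k}))$ are finite. Applying $H^1(k,-)$ to the Picard decomposition and using that $\Hom(\textup{Alb}(X_{\bar k}),\Pic^{0}(Y_{\bar k}))$ is a finitely generated abelian group whose Galois action factors through a finite quotient (so its $H^1$ is automatically finite), I deduce finiteness of $H^1(k,\Pic((X\times Y)_{\bar k}))$, and hence of $\Br_1(X\times Y)/\Br_0(X\times Y)$.

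For finiteness of $\Br((X\times Y)_{\bar k})^{\Gamma_k}$, the first two summands in the Brauer decomposition are finite by hypothesis. The mixed term $M(X,Y)$ is built from $H^1_{\text{\'et}}(X_{\bar k},\hat{\mathbb Z})$, $H^1_{\text{\'et}}(Y_{\bar k},\hat{\mathbb Z}(1))$ and the N\'eron--Severi groups; its Galois invariants are finite because the N\'eron--Severi groups are finitely generated and its torsion piece is controlled, in the $n\to\infty$ limit, by the two individual geometric Brauer hypotheses. Combining the two finiteness results with the exact sequence
$$0\to \Br_1(X\times Y)/\Br_0(X\times Y)\to \Br(X\times Y)/\Br_0(X\times Y)\to \Br\bigl((X\times Y)_{\bar k}\bigr)^{\Gamma_k}$$
coming from Hochschild--Serre yields the ``consequently'' part. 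The main obstacle is identifying $M(X,Y)$ precisely enough, via the \'etale K\"unneth formula in the limit, to verify that its Galois invariants remain finite under only the stated hypotheses on the individual factors; the Picard decomposition and the Galois-cohomology bookkeeping are otherwise standard.
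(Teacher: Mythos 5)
The paper offers no proof of this proposition: it cites \cite[Proposition 3.1]{BaleNewton19}, whose argument rests on Skorobogatov and Zarhin's structure theorems for the Brauer group of a product. Your outline follows the same strategy, and the algebraic half is essentially sound: the exact sequence of Galois modules $0\to\Pic(X_{\bar k})\oplus\Pic(Y_{\bar k})\to\Pic((X\times Y)_{\bar k})\to\Hom(\mathrm{Alb}(X_{\bar k}),\Pic^0(Y_{\bar k}))\to0$ (you do not need, and should not claim, a $\Gamma_k$-equivariant \emph{splitting}, since the retractions come from choices of $\bar k$-points that need not be Galois-stable), together with the identification $\Br_1(V)/\Br_0(V)\cong\H^1(k,\Pic(V_{\bar k}))$ over a number field, does give finiteness of $\Br_1(X\times Y)/\Br_0(X\times Y)$. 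One caution: your parenthetical ``$\H^1$ of a finitely generated module with action through a finite quotient is automatically finite'' is false in general ($\H^1(k,\Z/2)=k^*/k^{*2}$ is infinite); it holds here only because $\Hom(\mathrm{Alb}(X_{\bar k}),\Pic^0(Y_{\bar k}))$ is torsion-free.

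The genuine gap is exactly where you flag ``the main obstacle'': the Galois invariants of the mixed term $M(X,Y)$. In the limit over $n$ that term is governed by $\Hom(T_\ell\,\mathrm{Alb}(X_{\bar k}),T_\ell\Pic^0(Y_{\bar k}))$ modulo the image of $\Hom(\mathrm{Alb}(X_{\bar k}),\Pic^0(Y_{\bar k}))$; it has nothing to do with $\Br(X_{\bar k})^{\Gamma_k}$ or $\Br(Y_{\bar k})^{\Gamma_k}$, so your claim that its invariants are ``controlled by the two individual geometric Brauer hypotheses'' is incorrect. Finiteness of $M(X,Y)^{\Gamma_k}$ over a number field is a separate, unconditional theorem of Skorobogatov and Zarhin on products of varieties, and its proof requires Faltings' theorem that $\Hom(A,B)\otimes\Z_\ell\to\Hom(T_\ell A,T_\ell B)^{\Gamma_k}$ is an isomorphism, together with an argument that only finitely many primes $\ell$ contribute. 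Without importing that input (or reproving it), the transcendental half of your argument does not close.
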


\subsubsection{Hilbert's irreducibility theorem}\label{Hil}
Let $V$ be a quasi-projective integral variety defined over a number field $k$. A hilbertian subset of $V$ was defined originally as the complement in $V(k)$ of a thin set in the sense of J.-P. Serre \cite[Chapter 9]{SerreMW}. It is regarded as a large subset. Hilbert's irreducibility theorem states that hilbertian subset of $\P^1$ is non empty. In \cite{Ekedahl}, T. Ekedahl proved an effective version of Hilbert's theorem taking care of approximation properties as well. One slightly extends this notion by considering also closed points of $V$ in order to deal with questions of 0-cycles. Recall the following definition of hilbertian subsets.

\begin{defn}
A subset $\Hil$ of closed points of $V$ is call a \emph{hilbertian subset} if there exist a non empty Zariski open subset $U$ of $V$, an integral $k$-variety $Z$, and a finite \'etale morphism $\rho:Z\To U$ such that $\Hil$ is the set of closed points $\theta$ of $U$ whose fiber $\rho^{-1}(\theta)$ is connected.
\end{defn}

The following proposition was proved by the author in \cite[Lemme 3.4]{Liang1}.
As its application to the proof of our main theorem is crucial, we include a proof for the convenience of the reader.

\begin{prop}\label{hilbertirred}
Let $C$ be a projective smooth geometrically integral curve of genus $g$ defined over a number field $k$. Let $S\subset\Omega_k$ be a finite set of places of $k$
Let $\Hil$ be a hilbertian subset of $C$. Suppose that  $y_\infty$ is a global effective 0-cycle of $C$ of degree $\Delta>2g$ and $z_v$ is a separable effective 0-cycle rationally equivalent to $y_\infty$ on $C_{k_v}$ for all $v\in S$.

Then there exists a closed point $\theta$ of $C$ such that
\begin{itemize}
\item[-] $\theta\in\Hil$,
\item[-]  for all $v\in S$, we have rational equivalence $\theta\sim y_\infty\sim z_v$ on $C_{k_v}$,
\item[-]  for all $v\in S$, identifying effective 0-cycles as rational points on the symmetric product of $C$, $\theta$ is arbitrarily close to $z_v$ in $\Sym^\Delta_{C}(k_v)$.
\end{itemize}
\end{prop}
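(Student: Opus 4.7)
Since $\Delta>2g$, the line bundle $\mathcal{O}_C(y_\infty)$ is non-special and very ample, so $h^0(C,\mathcal{O}_C(y_\infty))=\Delta-g+1$ and the complete linear system is $|y_\infty|\simeq\P^N_k$ with $N=\Delta-g\geq 1$. Via the Abel--Jacobi map $\Sym^\Delta_C\to\Pic^\Delta_C$, the closed subscheme $|y_\infty|\hookrightarrow\Sym^\Delta_C$ parameterises exactly the effective 0-cycles of degree $\Delta$ rationally equivalent to $y_\infty$; in particular the given $z_v$ and the sought $\theta$ will be viewed as $k_v$- and $k$-points of $|y_\infty|$, respectively. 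Let $V\subset|y_\infty|$ denote the open subscheme parameterising separable divisors supported in $U$; it is dense in $|y_\infty|$ by Bertini.

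\textbf{Reduction.} Each $z_v$ is a $k_v$-point of $|y_\infty|\simeq\P^N_k$. Since $V$ is dense open, $V(k_v)$ is dense in $|y_\infty|(k_v)$ for the $v$-adic topology, so an initial approximation step allows us to assume $z_v\in V(k_v)$ already for every $v\in S$, i.e.\ that $z_v$ is separable and supported in $U_{k_v}$.

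\textbf{\'Etale cover and Hilbert irreducibility.} Let $\mathcal{Z}\subset C\times|y_\infty|$ be the universal effective divisor. Because $|y_\infty|$ is base-point-free, the projection $\mathcal{Z}\to C$ is a $\P^{N-1}$-bundle (divisors through a fixed point form a hyperplane in $|y_\infty|$), so $\mathcal{Z}$ is irreducible; and the projection $\mathcal{Z}_V\to V$ is finite \'etale of degree $\Delta$. Pulling back the \'etale cover $\rho:Z\to U$ along $\mathcal{Z}_V\to U$ yields a finite \'etale cover $T:=\mathcal{Z}_V\times_U Z\to V$; the projection $T\to Z$ has geometrically irreducible fibres (open subschemes of $\P^{N-1}$), and $Z$ is integral, so $T$ is $k$-irreducible. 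We apply Hilbert's irreducibility with $v$-adic approximation, in the Ekedahl form: since $T\to V$ is a $k$-connected finite \'etale cover of a dense open of $\P^N_k$, for any choice of $(z_v)_{v\in S}\in\prod_{v\in S}V(k_v)$ there exists $\theta\in V(k)$ arbitrarily close to every $z_v$ and such that $T_\theta$ is $\mathrm{Spec}$ of a single field.

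\textbf{Conclusion.} Such a $\theta$ is an irreducible effective divisor of degree $\Delta$, i.e.\ a single closed point of $C$ in $U$ of degree $\Delta$, and $\rho^{-1}(\theta)$ is connected, so $\theta\in\Hil$. As a $k$-point of $|y_\infty|$, it is linearly equivalent to $y_\infty$ on $C$, hence $\theta\sim y_\infty\sim z_v$ in $\CH_0(C_{k_v})$ for every $v\in S$. Proximity in $V(k_v)\subset|y_\infty|(k_v)$ transfers through the closed embedding $|y_\infty|(k_v)\hookrightarrow\Sym^\Delta_C(k_v)$ to proximity in $\Sym^\Delta_C(k_v)$, and composing with the reduction step yields $\theta$ as close to $z_v$ as desired. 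The main obstacle is combining the Hilbertian condition with $v$-adic approximation; this is exactly what Ekedahl's effective version of Hilbert's irreducibility provides, the relevant hypothesis being the $k$-irreducibility of $T$, which we checked via its structure as a bundle over the integral variety $Z$.
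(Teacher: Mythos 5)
Your proof is correct, and it follows a genuinely different route from the paper's, although both ultimately rest on Riemann--Roch and on Ekedahl's effective form of Hilbert's irreducibility theorem. The paper proceeds in two steps: it first applies weak approximation on the projectivised space of sections of $\mathcal{O}_C(y_\infty)$ to produce one global effective divisor $y_0\sim y_\infty$ close to all the $z_v$, and then runs Hilbert irreducibility on the pencil spanned by $y_0$ and $y_\infty$, realised as the morphism $\alpha\colon C\to\P^1$ given by the approximating function, transporting closeness from $\P^1(k_v)$ back to $\Sym^\Delta_C(k_v)$ by the implicit function theorem at the separable fibre $y_0$. You collapse both steps into a single application of Ekedahl over a dense open $V$ of the full linear system $|y_\infty|\simeq\P^{\Delta-g}$, replacing the pencil by the incidence variety $\mathcal{Z}\subset C\times|y_\infty|$ and pulling the covering $Z\to U$ back to it; closeness then passes to $\Sym^\Delta_C(k_v)$ directly through the closed embedding, with no implicit function theorem needed. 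Your version buys the elimination of the intermediate divisor $y_0$ and of the local analysis at it; the price is having to verify that $T=\mathcal{Z}_V\times_UZ$ is irreducible and to invoke Ekedahl over an open of $\P^{\Delta-g}$ rather than of $\P^1$ (the form you need, namely approximation of prescribed local points by rational points with irreducible fibre over a dense open of projective space, is exactly \cite[Proposition 3.2.1]{Harari}, which the paper already cites, and is valid in this generality). Two small points deserve a word in your write-up: the geometric irreducibility of the fibres of $T\to Z$ presupposes their non-emptiness, i.e.\ that every point $P$ of $U$ lies on some separable member of $|y_\infty|$ supported in $U$; this follows from base-point-freeness of $|y_\infty-P|$ (of degree $\Delta-1\geq 2g$) together with Bertini in characteristic zero, and in any case the irreducibility of $T$ survives possible empty fibres because $T\to Z$ is an open map onto an irreducible base with irreducible non-empty fibres.
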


\begin{proof}
By the moving lemma (Lemma \ref{movinglemma0}), we may assume that the support of $y_\infty$ is disjoint from the support of $z_v$ for all $v\in S$.

For each $v\in S$, write $z_v-y_\infty=\div_{C_{k_v}}(f_v)$ with a certain function $f_v\in k_v(C)^*/k^*_v$ up to a multiplication by a constant. Since $\deg(y_\infty)=\Delta>2g$, by Riemann\textendash Roch theorem, the set of sections $\Gamma(C,\mathcal{O}_C(y_\infty))$ is a vector space of dimension $d=\Delta+1-g>g+1$. Weak approximation applied to the projective space $\P^{d-1}$ give us a function $f\in k(C)^*/k^*$ such that $f$ is sufficiently close to $f_v$ for all $v\in S$. Whence, when we write $\div_C(f)=y_0-y_\infty$ the effective 0-cycle $y_0$ is sufficiently close to $z_v$ on $\Sym_C^\Delta(k_v)$ for all $v\in S$. Being sufficiently close to $z_v$ the 0-cycle $y_0$ is separable and its support is disjoint from the support of $y_\infty$.

The function $f$ defines a $k$-morphism $\alpha: C\To\P^1$ such that $\alpha^*(\infty)=y_\infty$ and $\alpha^*(0)=y_0$. It is \'etale at the support of the separable 0-cycle $y_0$.
If the hilbertian subset $\Hil$ of $C$ is defined by a finite \'etale morphism $Z\to U\subset C$ where $Z$ is an integral variety, then the composition $\beta:Z\to C\to \P^1$ defines a hilbertian subset $\Hil'$ of $\P^1$ in the following way: removing from $\P^1$ the finite set $\alpha(C\setminus U)$ of closed points and the branch locus of $\alpha$ we get an open subset $V$ of $\P^1$. Take $Z'$ to be the open subset $\beta^{-1}(V)$ of $Z$. Then the restriction of $\beta$ to $Z'$ being finite \'etale onto $V$ defines $\Hil'$. Moreover, we know that once $\theta'\in\Hil'$ then $\theta=\alpha^{-1}(\theta')$ must also be a closed point of $C$ belonging to $\Hil$. Now we apply an effective version by Ekedahl \cite{Ekedahl} of Hilbert's irreducibility theorem (see also \cite[Proposition 3.2.1]{Harari}), we obtain a $k$-rational point $\theta'\in\Hil'$ sufficiently close to $0\in\P^1(k_v)$ for all $v\in S$. Since $\alpha$ is \'etale at the support of $y_0$, the implicit function theorem implies that the closed point $\theta=\alpha^{-1}(\theta')\in\Hil$ is sufficiently close to $y_0$ and hence to $z_v$ on $\Sym_C^\Delta(k_v)$ for all $v\in S$. It is also clear that $\theta\sim y_\infty\sim z_v$ on $C_{k_v}$ for $v\in S$.
\end{proof}

\subsection{Proof of the main theorem}


The following lemma applies to a general fibration.

\begin{lem}\label{globalizationonbase}
Let $f:X\to Y$ be a dominant morphism between projective smooth geometrically integral varieties defined over a number field $k$.
Assume that $Y$ satisfies weak approximation with \BMo~for 0-cycles of degree $\delta$.

Fix a positive integer  $n$ and a finite set $S$ of places of $k$.

Then for any family $(x_v)_{v\in\Omega_k}$ of local 0-cycles of degree $\delta$ whose projection is orthogonal to $\Br(Y)$, there exist a global 0-cycle $y$ on $Y$ of degree $\delta$ and  a family $(x^0_v)_{v\in S}$ of local 0-cycles on $X$ of degree $0$ such that the projection of $x_v+nx^0_v$ to $Y$ is rationally equivalent to $y$  for all $v\in S$.
\end{lem}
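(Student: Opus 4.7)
The plan is to push the family $(x_v)$ forward along $f$, apply the weak approximation hypothesis on $Y$, and then lift the resulting local discrepancy back to $X$. The one subtlety is that $f_*\colon\CH_0(X_{k_v})\to\CH_0(Y_{k_v})$ need not be surjective, so weak approximation on $Y$ must be invoked with a modulus slightly larger than $n$ to absorb the failure of surjectivity.

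First, pushing forward gives a family $(f_*x_v)_{v\in\Omega_k}$ of local 0-cycles on $Y$ of degree $\delta$ (since $f_*$ preserves degrees), which is orthogonal to $\Br(Y)$ by hypothesis. Next, since $f$ is dominant, the generic fiber $X_\eta$ is a non-empty projective smooth $k(Y)$-variety and therefore has a well-defined index $d\geq 1$; fix a 0-cycle $\alpha$ on $X_\eta$ of degree $d$. Spreading $\alpha$ out yields, over some dense open $U\subseteq Y$, a relative cycle whose specialization to any closed point $P\in U_{k_v}$ is a 0-cycle of degree $d$ on the fiber $X_P$ projecting to $d\cdot P$ on $Y_{k_v}$. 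Combined with the moving lemma (Lemma~\ref{movinglemma0}), which moves any 0-cycle class on $Y_{k_v}$ into one supported in $U_{k_v}$, this shows that the subgroup $d\cdot\CH_0(Y_{k_v})$ is contained in the image of $f_*$ for each $v\in S$.

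Now apply the weak approximation with \BMo~hypothesis on $Y$ for 0-cycles of degree $\delta$ to the family $(f_*x_v)$ with the enlarged modulus $dn$: there exists a global 0-cycle $y$ on $Y$ of degree $\delta$ with $y-f_*x_v\in dn\,\CH_0(Y_{k_v})$ for every $v\in S$. Writing $y-f_*x_v = dn\,\mu_v$ in $\CH_0(Y_{k_v})$ and comparing degrees gives $\deg\mu_v=0$; by the previous step, the class $d\mu_v$ lifts to some $x^0_v\in\ZZ(X_{k_v})$ with $f_*x^0_v \sim d\mu_v$ on $Y_{k_v}$, and preservation of degrees under $f_*$ forces $\deg x^0_v = 0$. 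Then
\[ f_*(x_v + n x^0_v) = f_*x_v + n f_*x^0_v \sim f_*x_v + dn\,\mu_v = y \]
on $Y_{k_v}$ for each $v\in S$, which is the desired conclusion.

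The main obstacle is the liftability claim of the second paragraph: one must execute the spreading-out construction on the generic fiber carefully, ensuring that the 1-cycle obtained specializes correctly over the dense open $U$, and then use Lemma~\ref{movinglemma0} to reduce arbitrary classes to that open. The key conceptual point of the argument is that enlarging the modulus from $n$ to $dn$ in the application of weak approximation on $Y$ is exactly what is needed to absorb the index $d$ of the generic fiber and make the lift possible.
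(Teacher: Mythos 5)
Your proof is correct and follows essentially the same route as the paper: spread out a 0-cycle from the generic fiber over a dense open $U\subseteq Y$, use the moving lemma to place the discrepancy in $U$, invoke weak approximation on $Y$ with the modulus enlarged by the relevant degree, and lift. The only cosmetic difference is that you use the index $d$ of the generic fiber where the paper fixes a single closed point of degree $m$ (so the modulus is $dn$ rather than $mn$); both work identically.
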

\begin{proof}
Fix a closed point on the generic fiber, denote  by $m$ the degree of its residue field over $k(Y)$. Viewed as a 0-cycle of degree $m$ on the generic fiber, it extends to a family of 0-cycles $z$ of degree $m$ on the fibers $X_P$ over points $P$ contained in a certain open subset $U$ of $Y$. Then $f_*z=mP$.

Since the projection $(f_*x_v)_{v\in\Omega_k}$ is orthogonal to $\Br(Y)$, by assumption there exists a global 0-cycle $y$ on $Y$ of degree $\delta$ and a family $(y^0_v)_{v\in S}$ of local 0-cycles of degree $0$ such that for all $v\in S$ we have $y\sim f_*x_v+nmy^0_v$ on $Y_{k_v}$. We may assume that $y^0_v$ is supported in $U$ by the moving lemma (Lemma \ref{movinglemma0}). According to the argument in the previous paragraph, the 0-cycle $my^0_v$ must be the image by $f_*$ of a certain 0-cycle $x^0_v$ of degree $0$ on $X_{k_v}$. Then the projection of $x_v+nx^0_v$ is rationally equivalent to $y$ on $Y_{k_v}$ for all $v\in S$.
\end{proof}

The following lemma is observed by F. Balestrieri and R. Newton. It reduces the orthogonality to the product of the Brauer group to the orthogonality to the Brauer group of each factor. One should notice that the local rational points are considered over completions of $l$, which is a finite extension of $k$, while the concerned Brauer groups are of varieties over the base field $k$.
\begin{lem}[{\cite[Lemma 3.4]{BaleNewton19}}]\label{BNlem}
Let $X$ and $Y$ be projective smooth geometrically integral varieties defined over a number field $k$. Let $l$ be a finite extension of $k$. Then in $\prod_{w\in\Omega_l}(X\times Y)(l_w)$ we have the following inclusion
$$[\prod_{w\in\Omega_l}(X\times Y)(l_w)]^{\Br(X\times Y)}\subset [\prod_{w\in\Omega_l}X(l_w)]^{\Br(X)}\times [\prod_{w\in\Omega_l}Y(l_w)]^{\Br(Y)}$$
where by abuse of notation $\Br(V)$ stands for the image of $\Br(V)\to\Br(V_l)$ for a $k$-variety $V$.
\end{lem}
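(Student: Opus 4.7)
My plan is to exploit the functoriality of the Brauer--Manin pairing with respect to the two projections $p_X\colon X\times Y\to X$ and $p_Y\colon X\times Y\to Y$. The core observation is that pullback along these projections produces inclusions $p_X^*\Br(X)\subseteq\Br(X\times Y)$ and $p_Y^*\Br(Y)\subseteq\Br(X\times Y)$, compatibly with base change to $l$, so orthogonality to the image of $\Br(X\times Y)$ in $\Br((X\times Y)_l)$ automatically forces orthogonality to each of the pulled-back subgroups.

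Concretely, I would fix a family $((x_w,y_w))_{w\in\Omega_l}$ lying in the left-hand side of the claimed inclusion. For any $\alpha\in\Br(X)$, set $\beta=p_X^*\alpha\in\Br(X\times Y)$. For each place $w$ of $l$, functoriality of evaluation on Brauer groups gives
\[
\beta(x_w,y_w)\;=\;\alpha\bigl(p_X(x_w,y_w)\bigr)\;=\;\alpha(x_w)\;\in\;\Br(l_w).
\]
Summing the local invariants $\inv_w$ over all $w\in\Omega_l$ and invoking the orthogonality hypothesis for $\beta$, I obtain $\sum_w\inv_w(\alpha(x_w))=0$. Since $\alpha\in\Br(X)$ was arbitrary, the family $(x_w)_w$ lies in $[\prod_{w\in\Omega_l}X(l_w)]^{\Br(X)}$. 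The symmetric argument with $p_Y$ in place of $p_X$ shows that $(y_w)_w\in[\prod_{w\in\Omega_l}Y(l_w)]^{\Br(Y)}$, which is the desired inclusion.

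The argument amounts to a one-line functoriality check and presents no real obstacle; the only subtlety worth flagging in the write-up is that the inclusion is typically strict, because $\Br(X\times Y)$ can contain classes that are not of the form $p_X^*\alpha+p_Y^*\alpha'$ (for instance, those arising from cup products of classes in $\H^1$). It is precisely this strictness that makes the stated \emph{inclusion}, rather than any converse, the useful content of the lemma for the arguments in \S\ref{proofsection}: orthogonality to the full group $\Br(X\times Y)$ is a strictly stronger hypothesis than componentwise orthogonality, and the lemma converts it into information that can be fed into the single-factor weak approximation assumptions on $X$ and on $Y$.
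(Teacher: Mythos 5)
Your proof is correct and follows exactly the paper's own (one-line) argument: functoriality of the Brauer--Manin pairing applied to the two projections $X\times Y\to X$ and $X\times Y\to Y$. The extra remarks on strictness of the inclusion are accurate but not needed for the lemma.
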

\begin{proof}
It follows from the functoriality of the \BM~pairing applied to natural projections $X\times Y\to X$ and $X\times Y\to Y$.
\end{proof}

Recall some basic operators on (classes of) algebraic cycles (mostly 0-cycles), cf. \cite[Chapters 1 and 8]{Fulton}. Let $f:X\To Y$ be a proper morphism between varieties. For any closed point $P$ on $X$ with image $Q=f(P)$, the homomorphism $f_*:\ZZ(X)\To \ZZ(Y)$ defined by $f_*(P)=[k(P):k(Q)]Q$ induces a homomorphism $f_*:\CH_0(X)\To\CH_0(Y)$. Let $f:X\To Y$ be a flat morphism between varieties of relative dimension $n$. Then $\ZZ(Y)\To\textup{Z}_n(X),P\mapsto f^{-1}(P)$ induces a homomorphism $f^*:\CH_0(Y)\To \CH_n(X)$. When $f^*$ arises via a base extension of the ground field over which the concerned variety is defined, usually for the extension $k_v|k$ in our proof, we will omit $f^*$ in such a particular case to simplify the notation. For a smooth $d$-dimensional $k$-variety $V$, there exists an intersection product on classes of algebraic cycles $\CH_m(V)\times\CH_n(V)\overset{\cap}\To\CH_{m+n-d}(V)$, where $\CH_k(V)$ denotes the Chow group of $k$-dimensional cycles.

\begin{lem}\label{keylemma}
Let $X$ and $Y$ be proper smooth geometrically integral varieties defined over a number field $k$. Let $S\subset\Omega_k$ be a finite subset of places of $k$. Let $x\in\CH_0(X)$ and $y\in\CH_0(Y)$ be classes of global 0-cycles together with a family of classes of local 0-cycles of degree $1$ $(z_v)_{v\in S}\in\prod_{v\in S}\CH_0((X\times Y)_{k_v})$    such that for a given positive integer $n$:
\begin{itemize}
\item[-] the classes $x$ and ${p_{1}}_*(z_v)$ coincide in $\CH_0(X_{k_v})/n$ for all $v\in S$,
\item[-] the classes $y$ and ${p_{2}}_*(z_v)$ coincide in $\CH_0(Y_{k_v})/n$ for all $v\in S$,
\end{itemize}
where $p_1:X\times Y\to X$ and $p_2:X\times Y\to Y$ are natural projections.

Suppose in addition that
the class of 0-cycle $z_v$ can be represented by a $k_v$-rational point on $X\times Y$ for all $v\in S$.

Then $p_1^*(x)\cap p_2^*(y)$ and $z_v$ coincide in $\CH_0((X\times Y)_{k_v})/n$ for all $v\in S$.
\end{lem}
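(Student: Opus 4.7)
The plan is to reduce the congruence to a formal consequence of bilinearity of the intersection product, using the rational-point hypothesis to express $z_v$ itself as an intersection of pulled-back classes. Concretely, write $z_v = [(P_v, Q_v)]$ with $(P_v, Q_v) \in X(k_v) \times Y(k_v)$; then ${p_1}_*(z_v) = [P_v]$ and ${p_2}_*(z_v) = [Q_v]$, so the two mod-$n$ hypotheses read $x \equiv [P_v]$ in $\CH_0(X_{k_v})/n$ and $y \equiv [Q_v]$ in $\CH_0(Y_{k_v})/n$.

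The first step is to establish the geometric identity
$$z_v = p_1^*([P_v]) \cap p_2^*([Q_v]) \quad \text{in } \CH_0((X \times Y)_{k_v}).$$
Since $P_v$ is a $k_v$-rational point of the smooth variety $X_{k_v}$ and $p_1$ is flat, the flat pullback $p_1^*([P_v])$ is represented by the fiber $\{P_v\} \times Y_{k_v}$; similarly $p_2^*([Q_v])$ is represented by $X_{k_v} \times \{Q_v\}$. These two smooth subvarieties have complementary codimensions inside $(X \times Y)_{k_v}$ and meet transversally in the single reduced point $(P_v, Q_v)$, so their intersection product is exactly $z_v$ with multiplicity one.

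The second step propagates the mod-$n$ congruences through pullback and intersection. Flat pullback is a group homomorphism, hence commutes with reduction modulo $n$, so $p_1^*(x) \equiv p_1^*([P_v])$ and $p_2^*(y) \equiv p_2^*([Q_v])$ modulo $n$. The bilinear identity
$$\alpha \cap \beta - \alpha' \cap \beta' \;=\; (\alpha - \alpha') \cap \beta + \alpha' \cap (\beta - \beta')$$
applied with $\alpha = p_1^*(x)$, $\alpha' = p_1^*([P_v])$, $\beta = p_2^*(y)$, $\beta' = p_2^*([Q_v])$ then shows that $p_1^*(x) \cap p_2^*(y) - z_v$ lies in $n \cdot \CH_0((X \times Y)_{k_v})$, which is the conclusion.

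The only point that genuinely needs care is the geometric identity in the first step: it depends crucially on $z_v$ being a \emph{single} $k_v$-rational point, so that the back-pullbacks $p_1^*({p_1}_*(z_v))$ and $p_2^*({p_2}_*(z_v))$ are irreducible smooth subvarieties of complementary dimension meeting in exactly one reduced point. Without the rationality hypothesis, say for a 0-cycle supported at two distinct closed points, extra off-diagonal contributions would appear in the product and the stated equality would fail. Beyond that, the plan is pure functoriality and bilinearity.
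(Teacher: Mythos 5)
Your proof is correct and follows essentially the same route as the paper's: represent $z_v$ by a rational point $(P_v,Q_v)$, use bilinearity of the intersection product to pass the mod-$n$ congruences through $p_1^*(\cdot)\cap p_2^*(\cdot)$, and identify $p_1^*([P_v])\cap p_2^*([Q_v])$ with $z_v$ via the transversal intersection of the two fibers. You simply spell out the two steps (the descent of the intersection pairing mod $n$ and the geometric identity) that the paper states more tersely.
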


\begin{proof}
By assumption, let $z_v$ be represented by a $k_v$-rational point $(x_v,y_v)\in X(k_v)\times Y(k_v)$. Then ${p_1}_*(z_v)$ is represented by $x_v$ and ${p_2}_*(z_v)$ is represented by $y_v$. Under the intersection product
$$\CH_{\dim Y}((X\times Y)_{k_v})/n\times \CH_{\dim X}((X\times Y)_{k_v})/n\xrightarrow{\cap} \CH_0((X\times Y)_{k_v})/n,$$
the classes $p_1^*(x)\cap p_2^*(y)$ and $p_1^*{p_1}_*(z_v)\cap p_2^*{p_2}_*(z_v)$ coincide in $\CH_0((X\times Y)_{k_v})/n$. The latter being represented by $p_1^*(x_v)\cap p_2^*(y_v)=(x_v,y_v)$ is nothing but $z_v$.
\end{proof}

\begin{rem}\label{remWA0-cyc1}
This is a statement on weak approximation for 0-cycles on a product of varieties. Its analogue for rational points follows trivially from the definition of the product topology. However, for 0-cycles, our proof requires the additional hypothesis that $z_v$ is rationally equivalent to a local rational point to ensure that
$p_1^*{p_1}_*(z_v)\cap p_2^*{p_2}_*(z_v)=z_v\in\CH_0((X\times Y)_{k_v})$.
Without such a hypothesis, there is no reason to expect the equality to hold. The hypothesis seems to be very restrictive, but finally the lemma turns out to be useful for our goal. See also Remark \ref{remWA0-cyc2}.
\end{rem}

Now we are ready to present the proof of Theorem \ref{mainthm} with full details.

\begin{proof}[\textbf{Proof of Theorem \ref{mainthm}}]
We will deal with the case where $Z$ has exactly two factors. The proof also applies to the case where $Z$ has only one factor.
The general case with more factors follows by induction on the number of factors applied to Corollary \ref{productBM} which is a consequence of the special case of this theorem with exactly two factors.

If $V$ is a proper smooth variety, then by Chow's lemma and Hironaka's theorem, there exists a birational morphism $V'\to V$ with $V'$ projective and smooth. Birational morphisms induce isomorphisms between Brauer groups and isomorphisms between Chow groups of 0-cycles. Therefore we may suppose that $Z$ is a product of two projective varieties.

Let $(z_v)_{v\in\Omega_k}$ be a family of local 0-cycles of degree $\delta$ on $C\times Z$ orthogonal to $\Br(C\times Z)$  under the \BM~pairing. Let $S\subset\Omega_k$ be a finite set of places of $k$. Let $n$ be a positive integer. Our goal is to find a global 0-cycle $z=z_{n,S}$ of degree $\delta$ such that it has the same image as $z_v$ in $\CH_0(C_{k_v}\times Z_{k_v})/n$ for all $v\in S$.

We view $C\times Z$ as a trivial fibration $f:C\times Z\To C$. Our strategy is to apply the fibration method, it divides roughly into three steps:
\begin{itemize}
\item[-]    arrange the local 0-cycles to a good position with respect to the fibration;
\item[-]   approximate the image of the local 0-cycles under $f_*$ by a single closed point $\theta$ of $C$;
\item[-]   apply the hypothesis on weak approximation to the fiber $Z_\theta=f^{-1}(\theta)$.
\end{itemize}

First of all, without loss of generality we may make some more assumptions on $n$ and $S$.
\begin{itemize}
\item[-] Since $\Br(Z)/\Br_0(Z)$ is finite by Lemma \ref{prodBrfinite}, by replacing $n$ by the product of $n$ and the order of this group, we may assume that $n$ annihilates elements of $\Br(Z)/\Br_0(Z)$.
\item[-] We may assume that $S$ contains all archimedean places.
\item[-] We may assume that $S$ is sufficiently large such that the evaluations of the images in $\Br(C\times Z)$ of (a finite complete set of) representatives of $\Br(Z)/\Br_0(Z)$ at any local rational points of $C\times Z$ for places outside $S$ are identically $0$. This follows from a standard good reduction argument and the fact that the Brauer group of a complete discrete valuation ring is trivial.
\item[-] According to Lang\textendash Weil estimate and Hensel's lifting lemma, we may assume that the finite set $S$ is sufficiently large such that for any $v\notin S$ the smooth geometrically integral variety $Z$ possesses $k_v$-rational points.
\end{itemize}

By the functoriality of the \BM~pairing, the family of local degree $\delta$ 0-cycles $(f_*z_v)_{v\in\Omega_k}$ is orthogonal to $\Br(C)$.p
We apply Lemma \ref{globalizationonbase} which says that up to an $n$-divisible part the projection of the local 0-cycles to $C$ are  rationally equivalent to a global 0-cycle of degree $\delta$. More precisely, there exist a global 0-cycle $y$ on $C$ of degree $\delta$ and  a family $(z^0_v)_{v\in S}$ of local 0-cycles on $C\times Z$ of degree $0$ such that
$f_*z'_v\sim y$ in $\ZZ(C_{k_v})$ for $v\in S$ where $z'_v=z_v+nz^0_v\in \ZZ(C\times Z)$. For $v\notin S$, set $z'_v=z_v$.
Since $n$ annihilates the $\Br(Z)/\Br_0(Z)$, the family $(z'_v)_{v\in\Omega_k}$ is also orthogonal to the image of $\Br(Z)\to\Br(C\times Z)$  under the \BM~pairing.

Fix a global effective 0-cycle $z^\eff$ (for example a closed point) on $C\times Z$. For each $v\in S$, we apply the relative moving lemma (Lemma \ref{movinglemma2}) to $k_v$-fibration $f:C_{k_v}\times Z_{k_v}\to C_{k_v}$ to obtain a positive integer $d_v$ such that for $d>d_v$ we have $z'_v+dz^\eff\sim\tau_v$ where $\tau_v$ is a certain effective 0-cycle  $C_{k_v}\times Z_{k_v}$ whose projection $f_*\tau_v\in\ZZ(C_{k_v})$ is separable. A fortiori, the 0-cycle $\tau_v$ itself is separable. We fix an integer $d>\max\{d_v;v\in S\}$ sufficiently large such that the moving lemma (Lemma \ref{movinglemma1})  also applies to  $C$: $y+df_*z^\eff\sim y_\infty$ where $y_\infty$ is a certain effective 0-cycle on $C$ of very large degree. It is clear that $y_\infty\sim f_*\tau_v$ on $C_{k_v}$ for all $v\in S$. For $v\notin S$, we set $\tau_v=z'_v+dz^\eff$ which is not separable and which may not be effective.
Since the difference of $(\tau_v)_{v\in\Omega_k}$ and $(z'_v)_{v\in\Omega_k}$ is rationally equivalent to a global 0-cycle, the family $(\tau_v)_{v\in\Omega_k}$ is  orthogonal to the image of $\Br(Z)\to\Br(C\times Z)$  under the \BM~pairing.
Denote by $\Delta$ the degree of the 0-cycles $y_\infty$ and  $\tau_v$.

For $d_0=\Delta$ take a finite extension $k'$ of $k$ satisfying the two conditions concerning $k'$ in the statement of the theorem. The natural projection $C_{k'}\to C$ defines a hilbertian subset $\Hil$ of $C$. Then a closed point $\theta$ of $C$ belongs to $\Hil$ if and only if its residue field $k(\theta)$ and $k'$ are linearly disjoint over $k$. The two conditions will apply to $K=k(\theta)$.

Now we are ready to apply the extended version of Hilbert's irreducibility theorem (Proposition \ref{hilbertirred}) to find a closed point $\theta$ on $C$ verifying the following conditions:
\begin{itemize}
\item[-] $\theta\in\Hil$,
\item[-] for all $v\in S$,  a rational equivalence $\theta\sim y_\infty\sim f_*\tau_v$ on $C$,
\item[-] for all $v\in S$, identifying effective 0-cycles as rational points on the symmetric product of $C$, $\theta$ is arbitrarily close to $f_*\tau_v$ in $\Sym^\Delta_{C}(k_v)$,
\end{itemize}

For $v\in S$, since $f:C\times Z\to C$ is a smooth morphism and $f_*\tau_v$ is separable, the induced morphism $f_*:\Sym^\Delta_{C\times Z}(k_v)\to\Sym^\Delta_{C}(k_v)$ is smooth at $\tau_v$. It follows from the implicit function theorem that for each $v\in S$ there exists $\tau'_v\in \Sym^\Delta_{C\times Z}(k_v)$ arbitrarily close to $\tau_v$ and such that $f_*\tau'_v=\theta$. Therefore $\tau'_v$ is an effective local 0-cycle of degree $\Delta$ on $C_{k_v}\times Z_{k_v}$ sitting exactly on the fiber $f^{-1}(\theta)\simeq Z_{k(\theta)}$. The pull-back $\Spec(k(\theta))\otimes_{C}C_{k_v}=\Spec(k(\theta)\otimes_kk_v)$ of the closed point $\theta$ of $C$ to $C_{k_v}$ is an effective separable 0-cycle written as $\sum_{w|v}\theta_w$ where $\theta_w$ is a closed point of $C_{k_v}$ of residue field $k(\theta)_w$. The equality $f_*\tau'_v=\theta\in\ZZ(C_{k_v})$ means that the separable 0-cycle $\tau'_v$ is written as $\tau'_v=\sum_{w|v}\tau'_w$ where $\tau'_v$ is a $k(\theta)_w$-rational point of $C_{k_v}\times Z_{k_v}$ sitting exactly on the fiber $f^{-1}(\theta_w)$. Therefore we get a local rational point on the fiber $f^{-1}(\theta)\simeq Z_{k(\theta)}$ for each place $w$ of $k(\theta)$ lying over a place $v\in S$.

For any place $w$ of $k(\theta)$ lying over a place $v\notin S$, by the choice of $S$, there exist $k_v$-rational points on $Z$ and hence $k(\theta)_w$-rational points on $f^{-1}(\theta)\simeq Z_{k(\theta)}$. We fix such a $k(\theta)$-rational point and denote it by $\tau'_w$ and put $\tau'_v=\sum_{w|v}\tau'_w\in\ZZ(C_{k_v}\times Z_{k_v})$. We get a family $(\tau'_w)_{w\in\Omega_{k(\theta)}}$ of local rational points on $f^{-1}\simeq Z_{k(\theta)}$ which can also be viewed as a family $(\tau'_v)_{v\in\Omega_k}$ of local 0-cycles of degree $\Delta$ on $C\times Z$.

We are verifying the orthogonality of this family with the image of $\Br(Z)\to\Br(C\times Z)$  under the \BM~pairing. For $w$ lying over $v\notin S$, by the choice of $S$ the 0-cycles $\tau'_w$ and $\tau_v$ contribute nothing to the \BM~pairing. For $w$ lying over $v\in S$, since $\tau'_v$ is sufficiently close to $\tau_v\in\Sym^\Delta_{C\times Z}(k_v)$, they have the same image in $\CH_0(C_{k_v}\times Z_{k_v})/n$ by Lemma \ref{WitLem}. As $n$ annihilates $\Br(Z)/\Br_0(Z)$, the evaluations of the images in $\Br(C\times Z)$ of representatives of $\Br(Z)/\Br_0(Z)$ at $\tau'_v$ and at $\tau_v$ are equal. Summing up, $(\tau'_v)_{v\in\Omega_k}$ (or $(\tau'_w)_{w\in\Omega_{k(\theta)}}$) are orthogonal to the image of $\Br(Z)\to\Br(C\times Z)$ since it is the case for $(\tau_v)_{v\in\Omega_k}$.

The commutative diagram on the left induces the commutative diagram on the right for Brauer groups.
$$\xymatrix{
Z& f^{-1}(\theta)=\theta\times Z\ar[l]\ar[dl]&&\Br(Z)\ar[d]\ar[r]&\Br(f^{-1}(\theta))=\Br(Z_{k(\theta)})\\
C\times Z \ar[u]&  &&\Br(C\times Z)\ar[ru]&
}$$
View $(\tau'_w)_{w\in\Omega_{k(\theta)}}$ as a family of local rational points on the fiber $f^{-1}(\theta)$ identified with the $k(\theta)$-variety $Z_{k(\theta)}$. It is orthogonal to the image of $\Br(Z)\to\Br(Z_{k(\theta)})$ by functoriality of the \BM~pairing.

As $Z$ is the product of two varieties $X$ and $Y$, then we write the $k(\theta)_w$-rational point $\tau'_w$ on $Z_{k(\theta)}=X_{k(\theta)}\times Y_{k(\theta)}$ as $\tau'_w=(\alpha_w,\beta_w)$. Lemma \ref{BNlem} says that $(\alpha_w)_{w\in\Omega_{k(\theta)}}$ is orthogonal to the image of $\Br(X)\to\Br(X_{k(\theta)})$ and $(\beta_w)_{w\in\Omega_{k(\theta)}}$ is orthogonal to the image of $\Br(Y)\to\Br(Y_{k(\theta)})$. As $\theta\in\Hil$, the residue field $k(\theta)$ is linearly disjoint from $k'$ over $k$, the last two morphisms between Brauer groups are actually surjections by assumption. Weak approximation with \BMo~for 0-cycles of degree $1$ on $X_{k(\theta)}$ (respectively $Y_{k(\theta)}$) implies the existence of a global 0-cycle $\alpha\in\ZZ(X_{k(\theta)})$ (respectively $\beta\in\ZZ(Y_{k(\theta)})$) of degree $1$ such that its classes coincide with the class of $\alpha_w$ in $\CH_0(X_{k(\theta)_w})/n$ (respectively $\CH_0(Y_{k(\theta)_w})/n$) for all $w$ lying over $v\in S$.

Now we apply Lemma \ref{keylemma} to obtain a global 0-cycle $\tau'=\tau'_{n,S}$ of degree $1$ on the $k(\theta)$-variety $Z_{k(\theta)}=X_{k(\theta)}\times Y_{k(\theta)}$ whose class and the class of $\tau'_w$ have the same image in $\CH_0(Z_{k(\theta)_w})/n$ for all $w$ lying over $v\in S$. Being denoted still by $\tau'$, it can be regarded as a global 0-cycle of degree $\Delta$ on $C\times Z$. We have seen that the images of the classes of $\tau'_v$ and $\tau_v$ are the same in $\CH_0(C_{k_v}\times Z_{k_v})$ for all $v\in S$. The commutative diagram
$$\xymatrix{
\CH_0(Z_{k(\theta)})/n\ar[r]\ar[d]&\prod_{w|v}\CH_0(Z_{k(\theta)_w})/n\ar[d]^{\sum_{w|v}} \\
\CH_0(C\times Z)/n\ar[r]&\CH_0(C_{k_v}\times Z_{k_v})/n
}$$
and the formula $\tau'_v=\sum_{w|v}\tau'_w$ implies that the images of the classes of $\tau'$ and $\tau_v$ are the same in $\CH_0(C_{k_v}\times Z_{k_v})/n$ for all $v\in S$. Define $z=\tau'-dz^\eff$, it is a global 0-cycle of degree $\delta$ on $C\times Z$. Remember that $\tau_v\sim z_v+nz^0_v+dz^\eff$ on $C_{k_v}\times Z_{k_v}$. We finally check that the classes of $z$ and $z_v$ have the same image in $\CH_0(C_{k_v}\times Z_{k_v})/n$ for all $v\in S$, which completes the proof.
\end{proof}

\subsection{A remark for Kummer varieties}\label{RkKummer}
In Remark \ref{exampleforBr}, we have explained that when $V$ is a geometrically rationally connected variety or a K3 surface then the finiteness condition on the Brauer groups and the comparison of  Brauer groups under base extensions are both verified, and hence Theorem \ref{mainthm} applies. When $V$ is a Kummer variety arising from a 2-covering of an abelian variety of dimension $\geq2$ (see \cite[Definition 2.1]{SkZarKummer} for a precise definition), as previously remarked the finiteness condition on Brauer groups of Theorem \ref{mainthm} is verified. In general, it is not clear whether we have a comparison under base extensions of the ground field for the whole Brauer group of a Kummer variety. The work of  F. Balestrieri and R. Newton shows that for a  positive integer $c$, the $c$-primary part is unchanged for certain base extensions:
\begin{lem}[{\cite[Lemma 7.1(3)]{BaleNewton19}}]\label{KummerBr}
Let $V$ be a Kummer variety defined over a number field $k$. Then there exists a finite extension $k'$ of $k$ such that for any positive integer $c$ and all finite extensions $K$ of degree coprime to $c$ and linearly disjoint from $k'$ over $k$, the base change morphism
$$(\Br(V)/\Br_0(V))\{c\}\To(\Br(V_K)/\Br_0(V_K))\{c\}$$
is an isomorphism.
\end{lem}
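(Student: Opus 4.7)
My plan is to split the Brauer quotient into its algebraic and transcendental parts via the Hochschild--Serre spectral sequence and to control each piece under base change along $K/k$. Since $\Br(V_{\bar{k}})$ is finite by \cite[Corollary 2.8]{SkZarKummer}, and $\Pic(V_{\bar{k}})$ is a finitely generated $\Gamma_k$-module with finite torsion subgroup, I would first choose the extension $k'/k$ to be finite Galois and large enough that $\Gamma_{k'}$ acts trivially on both $\Br(V_{\bar{k}})$ and on $\Pic(V_{\bar{k}})_{\tor}$, and so that the Galois action on the free part of $\Pic(V_{\bar{k}})$ factors through $\Gal(k'/k)$.

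Next I would compare the five-term exact sequences coming from Hochschild--Serre over $k$ and over $K$:
\[
0 \to \H^1(k, \Pic(V_{\bar{k}})) \to \Br_1(V)/\Br_0(V) \to \Br(V_{\bar{k}})^{\Gamma_k} \to \H^2(k, \Pic(V_{\bar{k}}))
\]
and its analogue over $K$. When $K$ is linearly disjoint from $k'$ over $k$, the canonical map $\Gamma_K \twoheadrightarrow \Gal(Kk'/K)$ fits into an isomorphism $\Gal(Kk'/K) \xrightarrow{\sim} \Gal(k'/k)$; by the choice of $k'$, one has $\Br(V_{\bar{k}})^{\Gamma_K} = \Br(V_{\bar{k}})^{\Gamma_k}$, and the inflation-restriction sequences naturally identify the edge terms $\H^i(\Gal(k'/k), \Pic(V_{\bar{k}}))$ with $\H^i(\Gal(Kk'/K), \Pic(V_{\bar{k}}))$. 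This reduces the comparison to a question about the higher-degree terms $\H^i(Kk', \Pic(V_{\bar{k}}))$ versus $\H^i(k', \Pic(V_{\bar{k}}))$.

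Finally, the coprimality hypothesis $\gcd([K:k], c) = 1$ enters through a standard restriction-corestriction argument: for any discrete $\Gamma_k$-module $M$, the composition $\textup{cores} \circ \textup{res}$ equals multiplication by $[K:k]$, which is invertible on any $c$-primary torsion quotient. This forces the base change map to be injective on every $c$-primary component, and, combined with the identification of inflation edge terms from the previous step, yields the claimed isomorphism on the $c$-primary part of $\Br(V)/\Br_0(V)$. The main obstacle, and the reason the coprimality condition cannot be dropped, lies in controlling the transcendental contribution at primes dividing $[K:k]$: without the coprimality condition, genuinely new classes in $\Br(V_K)$ lifting elements of $\Br(V_{\bar{k}})^{\Gamma_K}$ can appear that do not descend to $\Br(V)$, obstructing surjectivity precisely at those primes.
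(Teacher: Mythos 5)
The paper does not prove this lemma at all: it is quoted verbatim from Balestrieri--Newton \cite[Lemma 7.1(3)]{BaleNewton19} and used as a black box, so there is no internal proof to compare against. Judged on its own terms, your plan has a genuine gap at its foundation. You assert that $\Br(V_{\bar{k}})$ is finite by \cite[Corollary 2.8]{SkZarKummer} and propose to choose $k'$ so that $\Gamma_{k'}$ acts trivially on $\Br(V_{\bar{k}})$. What Skorobogatov--Zarhin prove is finiteness of the \emph{invariants} $\Br(V_{\bar{k}})^{\Gamma_k}$ (and of $\Br_1(V)/\Br_0(V)$), not of the geometric Brauer group itself: for a Kummer variety one has $\Br(V_{\bar{k}})\cong(\Q/\Z)^{b_2-\rho}\oplus(\textup{finite})$, which is infinite (already for a Kummer surface, $b_2=22$ and $\rho\le 20$). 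Consequently no finite extension $k'$ can trivialize the $\Gamma_{k'}$-action on $\Br(V_{\bar{k}})$ --- if one did, then $\Br(V_{\bar{k}})^{\Gamma_{k'}}$ would be infinite, contradicting the Skorobogatov--Zarhin finiteness theorem applied over $k'$. This destroys the step where you deduce $\Br(V_{\bar{k}})^{\Gamma_K}=\Br(V_{\bar{k}})^{\Gamma_k}$ from the choice of $k'$, and with it the comparison of the two five-term sequences. The uniformity of $k'$ in $c$ is precisely the delicate content of the lemma; Balestrieri--Newton obtain it from structural results specific to Kummer varieties (the identification of $\Br(V_{\bar{k}})$ with the Brauer group of the underlying abelian variety and control of its torsion), combined with the restriction--corestriction argument on each $c$-primary piece, not by trivializing the whole geometric Brauer group.

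Two smaller points. Your restriction--corestriction argument does give injectivity of the base change map on $c$-primary quotients when $\gcd([K:k],c)=1$, and that part is sound; but surjectivity is where the transcendental quotient $\Br(V_K)/\Br_1(V_K)\hookrightarrow\Br(V_{\bar{k}})^{\Gamma_K}$ must be controlled, and your plan is vague exactly there (``reduces the comparison to a question about the higher-degree terms''). Also, for the algebraic part, identifying $\H^1(k,\Pic(V_{\bar{k}}))$ with $\H^1(\Gal(k'/k),\Pic(V_{\bar{k}}))$ by inflation requires $\H^1(k',\Pic(V_{\bar{k}}))=0$; this does hold because $\Pic(V_{\bar{k}})$ is finitely generated and torsion-free for a Kummer variety of dimension $\ge 2$, but your hedging about a ``finite torsion subgroup'' would, if torsion were actually present, break that identification (for finite $M$ with trivial action, $\Hom(\Gamma_{k'},M)$ is far from zero).
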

We may adapt this lemma to the proof of Theorem \ref{mainthm} in order to allow Kummer varieties to appear in the factors of $Z$, but the precise statement should be slightly modified as follows (if at least one factor $V$ of $Z$ is a Kummer variety).

\noindent Modified assumptions:
\begin{itemize}
\item[-] For each Kummer variety factor $V$ of $Z$, for any finite extension $K$ of $k$ linearly disjoint from $k'$, the variety $V_K$ satisfies weak approximation with $c$-primary \BMo~for 0-cycles of degree $1$.
\item[-] Additionally, assume that $\delta$ is coprime to $c$.
\end{itemize}
\noindent Modified conclusion:
\begin{itemize}
\item[-] 0-cycles of degree $\delta$ on $C\times Z$ satisfy weak approximation with $c'$-primary \BMo, where $c'$ is the least common multiple of $c$ and $|\Br(V')/\Br_0(V')|$ for all other non-Kummer-variety factors $V'$ of $Z$.
\end{itemize}
\noindent Modifications of the proof:
\begin{itemize}
\item[-] Starting from $(z_v)_{v\in\Omega_k}$ orthogonal to $c'$-primary part of $\Br(C\times Z)/\Br_0(C\times Z)$, we only need to consider a complete set of representatives of the image of $(\Br(Z)/\Br_0(Z))\{c'\}$. We get $(\tau'_w)_{w\in\Omega_{k(\theta)}}=((\alpha_w,\beta_w))_{w\in\Omega_{k(\theta)}}$ whose factor is orthogonal to $(\Br(V)/\Br_0(V))\{c'\}$ respectively for $V=X$ or $V=Y$. If $V$ is not a Kummer variety, the last group equals to the whole group $\Br(V)/\Br_0(V)$ by  definition of $c'$, and one can continue with the usual proof. If $V$ is a Kummer variety, the last group contains the $c$-primary part, and one can apply Lemma \ref{KummerBr} with the modified assumptions. Indeed, in the proof one may  further choose the sufficiently large integer $d$ to be divisible by $c$, then $[k(\theta):k]=\Delta\equiv\delta(\textup{mod}~c)$ is coprime to $c$.
\end{itemize}


\small


\bibliographystyle{alpha}
\bibliography{mybib1}
\end{document}